\theoremstyle{thmstyleone}
\newtheorem{theorem}{Theorem}
\newtheorem{proposition}[theorem]{Proposition}%
\newtheorem{lemma}{Lemma}%
\newtheorem{corollary}{Corollary}%
\newtheorem{conjecture}{Conjecture}
\newtheorem{conditional proposition}{Conditional Proposition}
\theoremstyle{thmstyletwo}
\newtheorem{example}{Example}%
\newtheorem{remark}{Remark}%
\theoremstyle{thmstylethree}
\newcommand{\zero}{\mathbf{0}}
\newcommand{\one}{\mathbf{1}}
\renewcommand{\sub}{\sqsubseteq}
\newcommand{\abs}[1]{\lvert#1\rvert}
\newcommand{\edit}[1]{#1}
\begin{document}

\title[A subsemigroup of the rook monoid]{A subsemigroup of the rook monoid}

\author*[1]{George Fikioris}\email{gfiki@ece.ntua.gr}
\author[2]{Giannis Fikioris}\email{gfikioris@cs.cornell.edu}

\affil*[1]{\orgdiv{School of Electrical and Computer Engineering}, \orgname{National Technical University of Athens}, \orgaddress{\street{Zografou}, \city{Athens}, \postcode{GR 15773}, \state{Attica}, \country{Greece}}}

\affil[2]{\orgdiv{Department of Computer Science}, \orgname{Cornell University}, \orgaddress{\street{Hoy Rd}, \city{Ithaca}, \postcode{NY 14853}, \state{New York}, \country{USA}}}

\abstract{
\unboldmath{}
We define a subsemigroup $S_n$ of the rook monoid $R_n$ and investigate its properties. To do this, we represent the nonzero elements of $S_n$ (which are $n\times n$ matrices) via certain triplets of integers, and develop a closed-form expression representing the product of two elements; these tools facilitate straightforward deductions of a great number of properties. For example, we show that $S_n$ consists solely of idempotents and nilpotents, find the numbers of idempotents and nilpotents, compute nilpotency indexes, \edit{determine Green's relations and ideals, and come up with a minimal generating set.}  Furthermore, we give a necessary and sufficient condition for the $j$th root of a nonzero element to exist in $S_n$, show that existence implies uniqueness, and  compute the said root explicitly. We also point to several combinatorial aspects; describe a number of subsemigroups of $S_n$ \edit{(some of which are familiar from previous studies)}; and, using rook $n$-diagrams, graphically interpret many of our results. 
}

\keywords{rook monoid, symmetric inverse semigroup, rook $n$-diagrams, inverse semigroups, orthodox semigroups, \edit{combinatorial semigroups}}

\maketitle

\textbf{MSC codes.} 20M18, 20M19

\bmhead{Acknowledgments}

\edit{We thank the Reviewer, whose comments and suggestions notably improved this paper. Theorem~\ref{th:minimal-generating-set-for-mn}, in particular, is due to the Reviewer.} The work of Giannis Fikioris was supported in part by \edit{National Science Foundation (NSF) grant CCF-1408673 and Air Force Office of Scientific Research (AFOSR)} grant FA9550-19-1-0183.

\section{Introduction}

For $n=2,3,4,\ldots$, the \textsl{rook monoid} $R_n$, also known as the symmetric inverse semigroup $\mathcal{IS}_n$, consists of all partial injective transformations of $\{1,2,\ldots,n\}$. Any such transformation can be represented as an $n\times n$ matrix whose entries are $0$~or~$1$, and with at most one $1$ in every row and every column. If we think of the matrix as an $n\times n$ chessboard with rooks placed at the nonzero matrix entries, then no rook attacks any other rook; hence the name rook monoid, coined in 2002 by Solomon~\cite{Solomon, Li-Li-Cao}.

This paper introduces a subsemigroup $S_n$ of $R_n$ and studies its properties (note that the symbol $S_n$ does not denote the permutation group). Our $S_n$ is also a subsemigroup of $\mathcal{POI}_n$, i.e., the set of all partial order-preserving injective transformations \cite{Fernandes,East2006,East2010}, which is itself a subsemigroup of $R_n$. Besides $S_n$, we discuss two closely related semigroups, namely the monoid $M_n=S_n\cup \{\one\}$ (where $\one$ is the monoid identity, represented by the $n\times n$ identity matrix), as well as a semigroup of countably infinite order to be denoted by $S_\infty$.

We refer to a number of well-known properties of $R_n$. They are summarized in the lemma that follows.
\begin{lemma}\cite{Ganyushkin}
\label{lemma:rook-monoid}
\begin{itemize}
\item The order of $R_n$ is given by $\abs{R_n}=\sum_{k=0}^nk!\binom{n}{k}^2$.

\item $R_n$ contains $2^n$ idempotents, namely the diagonal matrices belonging to $R_n$. 

\item $R_n$ contains $\sum_{k=1}^n\frac{n!}{k!}\binom{n-1}{k-1}$ nilpotents.

\item $R_n$ is an inverse semigroup, i.e., any $x\in R_n$ has a unique inverse $y\in R_n$. The matrices representing $x$ and $y$ are transposes of one another. 

\item The element $x\in R_n$ has an inverse in the sense of groups (i.e., there exists a $y\in R_n$ such that $xy=yx=\one$) iff $x$ is a bijection, so that it is represented by a permutation matrix.
\end{itemize}
\end{lemma}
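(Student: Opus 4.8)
The plan is to prove the five bullets one at a time, throughout identifying an element of $R_n$ both with a partial injective map $\alpha$ of $[n]=\{1,\ldots,n\}$ and with its $0$–$1$ matrix, the latter carrying a $1$ in position $(\alpha(j),j)$ for each $j$ in the domain of $\alpha$.

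For the order, I would note that an element of $R_n$ amounts to a choice of domain $D\subseteq[n]$, a choice of image $I\subseteq[n]$ with $\abs I=\abs D$, and a bijection $D\to I$; summing over $k=\abs D$ gives $\sum_{k=0}^n\binom nk^2 k!$. For the idempotents, if $\alpha^2=\alpha$ then injectivity together with $\alpha(\alpha(j))=\alpha(j)$ forces $\alpha(j)=j$ throughout the domain, so $\alpha$ is the partial identity on some $D\subseteq[n]$; conversely every such partial identity is idempotent, and as matrices these are precisely the diagonal $0$–$1$ matrices in $R_n$, one per subset, hence $2^n$ of them.

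The substantive point is counting the nilpotents, and this is the step I expect to be the main obstacle. I would pass to the functional digraph of $\alpha$ on vertex set $[n]$, placing an arc $j\to\alpha(j)$ whenever $j$ lies in the domain; since $\alpha$ is a partial injection, every vertex has in-degree and out-degree at most $1$, so the digraph is a disjoint union of directed paths and directed cycles. A cycle yields a forward orbit that never vanishes, while if the digraph is acyclic every forward orbit exits the domain within $n$ steps and $\alpha^n=\zero$; thus $\alpha$ is nilpotent precisely when the digraph is a disjoint union of $k$ directed paths (isolated vertices counting as paths of length $0$), for some $1\le k\le n$. Such a configuration is exactly an unordered family of $k$ nonempty lists of distinct elements partitioning $[n]$, and there are $\tfrac{n!}{k!}\binom{n-1}{k-1}$ of these: the ordered families correspond to a permutation of $[n]$ (read by concatenating the lists) together with a choice of $k-1$ cut points among the $n-1$ internal gaps, and $k!$ of them give the same unordered family. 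Summing over $k$ yields the stated total.

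Finally, for a partial injection $\alpha$ the relational inverse $\alpha^{-1}$ is again a partial injection satisfying $\alpha\alpha^{-1}\alpha=\alpha$ and $\alpha^{-1}\alpha\alpha^{-1}=\alpha^{-1}$, so $R_n$ is regular; since the idempotents are partial identities and the product of the partial identities on $D$ and $D'$ is the partial identity on $D\cap D'$, any two idempotents commute, whence $R_n$ is an inverse semigroup and every element has a unique inverse. The matrix of $\alpha^{-1}$ carries a $1$ in position $(j,\alpha(j))$, i.e.\ it is the transpose of the matrix of $\alpha$. For the last bullet, $xy=\one$ forces $x$ to be onto and $yx=\one$ forces $x$ to be one-to-one, so only bijections can be group-invertible; conversely the permutation matrices form a subgroup of $R_n$, so the condition is also sufficient.
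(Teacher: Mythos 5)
Your proof is correct, but the comparison here is lopsided: the paper offers no proof of this lemma at all --- it is stated as a collection of known facts with a citation to Ganyushkin--Mazorchuk, and the only internal check is the $n=2$ instance worked out in Example~1. What you have supplied is a self-contained derivation, and each step holds up. The order count (domain, image, bijection) and the idempotent count (idempotents of a partial injection are exactly the partial identities, hence diagonal $0$--$1$ matrices, one per subset) are routine and right; note only that in the idempotent argument you are implicitly using $\mathrm{dom}(\alpha^2)=\mathrm{dom}(\alpha)$ to know that $\alpha(j)$ lies in the domain before cancelling by injectivity. The substantive contribution is the nilpotent count: the functional-digraph decomposition into paths and cycles, the observation that nilpotency is equivalent to acyclicity (a cycle survives in every power, while an acyclic partial injection satisfies $\alpha^n=\zero$), and the identification of acyclic configurations with partitions of $[n]$ into $k$ nonempty lists counted by the Lah number $\tfrac{n!}{k!}\binom{n-1}{k-1}$ are all correct, and the formula checks against the three nilpotents of $R_2$ (including $\zero$, which is the all-isolated-vertices case $k=n$). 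For the inverse-semigroup bullet you lean on the standard characterization that a regular semigroup with commuting idempotents is inverse; that theorem is nontrivial but entirely classical, and invoking it is appropriate for a statement of this kind. The only cosmetic discrepancy is that you place the $1$ for $\alpha$ at $(\alpha(j),j)$ whereas the paper's convention (stated when introducing rook $n$-diagrams) is $x_{i,\alpha(i)}=1$; this swaps which of $\alpha,\alpha^{-1}$ gets which matrix but does not affect any of the five assertions.
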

\begin{example} 
\label{ex:r2}
For $n=2$, the elements of $R_n=R_2$ can be represented by the following $7=\abs{R_2}$ matrices,
\begin{align*}
\begin{pmatrix}
0 & 1\\
1 & 0
\end{pmatrix},
&\begin{pmatrix}
1 & 0\\
0 & 1
\end{pmatrix}=\one,
\begin{pmatrix}
1 & 0\\
0 & 0
\end{pmatrix}=e,
\begin{pmatrix}
0 & 0\\
0 & 1
\end{pmatrix}=f,\\
&\begin{pmatrix}
0 & 0\\
0 & 0
\end{pmatrix}=\zero,
\begin{pmatrix}
0 & 1\\
0 & 0
\end{pmatrix}=a,
\begin{pmatrix}
0 & 0\\
1 & 0
\end{pmatrix}=b
\end{align*} 
(see Proposition \ref{prop:brandt} for the meaning of the symbols $e$, $f$, $a$, and $b$). 
The group of permutations of $\{1,2\}$ is represented by the first two matrices in our list, which are permutation matrices and represent the only two elements that have an inverse in the sense of groups. The first matrix in our list---to which we assigned no symbol---is a square root of $\one$ ($j$th roots in $R_n$ are discussed in \cite{annin,york}). The next $4=2^2$ matrices are idempotent, and the last three are nilpotent with indexes 1, 2, and 2, respectively. \end{example}

In Section~\ref{section:graphical}, we use the usual maps---called rook $n$-diagrams \cite{xiao}---to give graphical interpretations of some of the properties of $M_n$ and $S_n$. The rook $n$-diagram of any element in the rook monoid $R_n$ is a graph with two rows. Each row has $n$ vertices labeled $1$, $2$, \dots, $n$. The graph's edges start from the top row and end in the bottom one; an edge from $i$ to $j$ means that the element maps $i$ to $j$, so that the matrix entry $x_{ij}$ equals $1$. Each vertex---whether in the top or bottom row---must belong to at most one edge, with the zero matrix having no edges. See the example in Figure~\ref{fig:rook}. 

\begin{figure}[h]
  \centering
  \includegraphics[width=.45\textwidth]{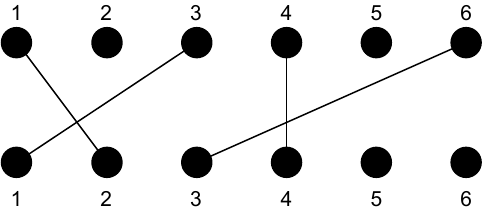}
  \caption{Rook $n$-diagram  of an element of $R_n$. We specifically depict the rook $6$-diagram of the matrix of $R_6$ whose nonzero entries are $x_{12}=x_{31}=x_{44}=x_{63}=1$}
  \label{fig:rook}
\end{figure}

In particular, a rook $n$-diagram corresponds to an element of the submonoid $\mathcal{POI}_n$ iff its edges do not intersect, such as the example in Figure~\ref{fig:poi}.

\begin{figure}[h]
  \centering
  \includegraphics[width=.45\textwidth]{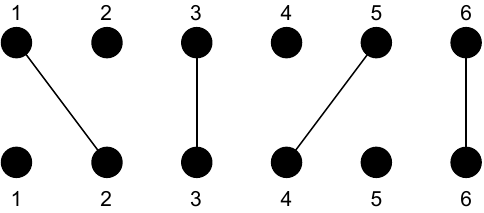}
  \caption{When no edges intersect, a rook $n$-diagram corresponds to an element of $\mathcal{POI}_n$. Here we specifically depict the matrix of  $\mathcal{POI}_6$ whose nonzero entries are $x_{12}=x_{33}=x_{54}=x_{66}=1$}
  \label{fig:poi}
\end{figure}

Concatenation of rook $n$-diagrams corresponds to multiplication, as we will further discuss (for the case of $M_n$) in Section~\ref{section:graphical}.

\section{Notation and conventions}
\label{section:notation}

Throughout, $\zero$ denotes semigroup zeros and $\one$ denotes monoid identities, as in Example~\ref{ex:r2}. The symbol $\abs S$ denotes the \textsl{order} of the semigroup $S$, i.e., the cardinal number of its set of elements. All our semigroups have at least two elements, so that $\zero\ne\one$. A $j$th \textsl{root} of $x\in S$ is a $y\in S$ such that $x^j= y$ ($j\in\mathbb{N}$).  We use $\sub$ in place of ``is a subsemigroup of.'' If $\one$ is the identity of $M_b$ and $\one\in M_a\sub M_b$, we say that $M_a$ is a \textsl{submonoid} of $M_b$ (in Section~\ref{section:submonoids}, we  encounter monoid subsemigroups that are not submonoids because the identities differ).  The term \textsl{inverse element} has its familiar meaning within the context of semigroups; we also use the term \textsl{inverse in the sense of groups}, which we defined in Lemma \ref{lemma:rook-monoid}. \edit{In Section~\ref{section:green}, we use the the traditional notations (e.g., \cite{Lawson}) for Green's relations, associated equivalence classes, and principal ideals.}

The elements of our semigroups are matrices whose entries are $0$ and $1$, called zero and one, respectively. $\mathbb{Z}$ is the set of integers, $\mathbb{N}$ is the set of positive integers, and  $\mathbb{N}_0$ is the set of nonnegative integers. An \textsl{infinite} matrix is one whose  entries are $x_{i,j}$ with $i,j\in \mathbb{N}$. We use $\delta_{ij}$ ($i,j\in \mathbb{N}_0$) to denote the Kronecker delta.   $\lceil x \rceil$ stands for the ceiling of $x\in\mathbb{R}$.

We often assume that elements of isomorphic semigroups coincide. For example, our $M_n$ is a monoid of $n\times n$ matrices. After establishing a multiplication-preserving, one-to-one correspondence between the nonzero matrices of $M_n$ and certain triplets of integers, we make no distinction between a matrix and its representing triplet. As another example, any multiplicative semigroup consisting of $n\times n$ matrices is isomorphic to a semigroup consisting of square matrices whose dimension is larger than $n$, including infinite matrices: We can regard each $n\times n$ matrix as being a submatrix of the larger one, with the remaining entries of the larger matrix all equal to zero. It is in this sense that---in Section~\ref{section:finite}---we define $M_n$ as a subsemigroup of a certain semigroup $S_\infty$, whose elements are infinite matrices. 

While we discuss $M_n$ and $S_n$ primarily via the aforementioned triplets (not the corresponding matrices), our  proof of Theorem~\ref{th:multiplication-inf} makes use of infinite-length row and column vectors whose entries are all $0$ except for at most one entry, which equals $1$. In the said proof, the relevant notation is as follows. $V_q$ ($q\in\mathbb{N}_0$) is the infinite column vector whose entries $V_{qj}$ are
\begin{equation}
    V_{qj}=\delta_{qj}, \quad q\in \mathbb{N}_0, \quad j\in \mathbb{N}
\end{equation}
and $V^T_q$ is the transpose of $V_q$. Therefore, $V^T_q$ is an infinite row vector and  $V_0$ and $V_0^T$ are the infinite zero vectors. Furthermore, the inner product $V^T_qV_p$ is well defined (there is a finite number of nonzero terms, so there are no convergence issues associated with the infinite summation) and is~given~by
\begin{equation}
\label{eq:vectors-inner-product}
    V^T_qV_p=\begin{cases}
    0, \quad p=q=0 \quad \mathrm{or}\quad  p\ne q,\\
    1,\quad p=q\in\mathbb{N}.
    \end{cases}
\end{equation}

\edit{Let $S$ be a finite semigroup, let $A\subseteq S$ be a generating set of $S$, and denote the cardinality of $A$ by $\abs{A}$. $A$ is a \textsl{minimal generating set} (texts such as \cite{Ganyushkin} use \textsl{irreducible} in place of \textsl{minimal}) if no proper subset of $A$ can generate $S$. The \textsl{rank} of $S$, denoted throughout by $\mathrm{rank}(S)$, is defined by \cite{gray2014}
\begin{equation}
    \mathrm{rank}(S)=\mathrm{min}\{\abs{A}:A\subseteq S \mathrm{\ and\ } A \mathrm{\ generates\ } S\}.
\end{equation}
This notation should not be confused with the symbol $\mathrm{rnk}$, used (in Section \ref{section:green} only) for the rank of a partial transformation.
}
\section{Semigroup \texorpdfstring{$S_\infty$}{S infinity} of countably infinite order}
\label{section:infinite}

The elements $x$ of $S_\infty$ are infinite matrices, see the example in Fig. \ref{fig:matrix_example}. Element $\zero\in S_\infty$ is the zero matrix. The nonzero matrices of $S_\infty$ are those for which

\begin{itemize}
    \item[(i)] Each matrix entry is zero ($0$) or one ($1$).
    \item [(ii)] There is a finite and nonzero number of ones, with all ones lying on a single diagonal.
    \item[(iii)] Within this diagonal, the ones form an uninterrupted block, i.e., there is no zero between any two ones.
\end{itemize}

\begin{figure}[h]
    \centering
    \includegraphics{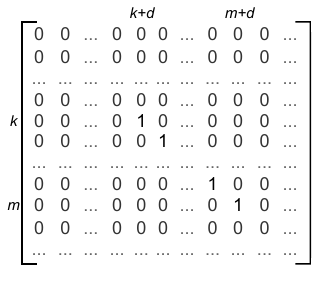}
    \caption{The infinite matrix corresponding to $\langle d,k,m\rangle$}
    \label{fig:matrix_example}
\end{figure}

For any $x\in S_\infty\setminus\{\zero\}$, let $k \in\mathbb{N}$ be the row of the northwestern one; let $m \in \mathbb{N}$  be the row of the southeastern one; and let $d\in\mathbb{Z}$ be the diagonal on which the ones lie, with $d=0$ corresponding to the main diagonal, and $d>0$ ($d<0$) to diagonals above (below) the main one. Then the matrix entries $x_{ij}$ of $x$ are given by
\begin{equation}
\label{eq:elements-infinite}
x_{ij}=
\begin{rcases}
\begin{dcases}
1,\quad k\le i\le m \quad \mathrm{and}\quad  j-i=d,\\
0,\quad \mathrm{otherwise,}
\end{dcases}
\end{rcases},\quad
i,j\in\mathbb{N}.
\end{equation}
Conversely, as long as
\begin{equation*}
1-\min(0,d)\le k\le m, 
\end{equation*}
we can use a triplet of integers $d,k,m$ to represent each $x\in S_\infty\setminus\{\zero\}$. We do this by letting $\langle d,k,m\rangle$ stand for the matrix $x\in S_\infty\setminus \{\zero\}$ whose entries $x_{ij}$ are given by (\ref{eq:elements-infinite}). We can thus consider the set of elements of $S_\infty$ to be 
\begin{equation}
\label{eq:infinite-s-definition}
S_\infty=\{\zero\}\cup\{\langle d,k,m\rangle:\  d\in \mathbb{Z};\  k, m\in\mathbb{N}; \ 1-\min(0,d)\le k \le m\}.
\end{equation}
Note that the number of ones in the nonzero matrix $\langle d,k,m\rangle$ is $m-k+1>0$.

The operation in $S_\infty$ is matrix multiplication, which is well-defined (in the sense that there are no convergence issues) and associative. The theorem that follows shows that this operation is also closed in $S_\infty$ and, whenever the result is nonzero, gives the triplet representing the product. 

\begin{theorem}
\label{th:multiplication-inf}
$S_\infty$ is a noncommutative semigroup of countably infinite order with $x\zero=\zero x=\zero$ for every $x\in S_\infty$. The product of any two nonzero elements $\langle d,k,m\rangle$ and $\langle d',k',m'\rangle$ is given by 
\begin{equation}
\label{eq:multiplication-infinite}
\langle d,k,m\rangle \langle d',k',m'\rangle=
\begin{cases}
\langle d'',k'',m''\rangle,\quad k'' \le m'',\\\
\zero,\quad k''> m'',
\end{cases}
\end{equation}
in which the parameters $d''$, $k''$, and $m''$ are 
\begin{equation}
\label{eq:ddoubleprime}
d''=d+d',
\end{equation}
\begin{equation}
\label{eq:kdoubleprime}
k''=\max(k,k'-d),
\end{equation}
\begin{equation}
\label{eq:mdoubleprime}
m''=\min(m,m'-d).
\end{equation}
Irrespective of whether $k''\le m''$ or not, the parameters  $d''$ and $k''$ satisfy
\begin{equation}
\label{eq:restrictions-on-kdoubleprime}
    1-\min(0,d'')\le k''.
\end{equation}
\end{theorem}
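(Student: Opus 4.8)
The plan is to prove everything by direct computation with the matrix product, using the fact that each nonzero element of $S_\infty$ is built from the vectors $V_q$ of Section~\ref{section:notation}. First I would record the key structural identity: for $x=\langle d,k,m\rangle$ one has $x=\sum_{i=k}^{m}V_i V_{i+d}^T$, so that $x$ acts on the standard vectors by $xV_p = V_{p-d}$ when $k\le p-d$ and $p\le m$ (equivalently $k+d\le p\le m+d$), and $xV_p=V_0$ otherwise, while $V_q^T x = V_{q+d}^T$ when $k\le q\le m$ and $V_q^T x = V_0^T$ otherwise. These follow immediately from \eqref{eq:vectors-inner-product}. This reduces the whole theorem to bookkeeping with $\max$ and $\min$.

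Next I would compute the product $z = \langle d,k,m\rangle\langle d',k',m'\rangle$ entrywise. Writing $y=\langle d',k',m'\rangle=\sum_{i'=k'}^{m'}V_{i'}V_{i'+d'}^T$, the product is $z=\sum_{i=k}^{m}\sum_{i'=k'}^{m'}V_i (V_{i+d}^T V_{i'}) V_{i'+d'}^T$, and by \eqref{eq:vectors-inner-product} the inner product $V_{i+d}^T V_{i'}$ is nonzero exactly when $i+d=i'\ge 1$. Since $i\ge k\ge 1-\min(0,d)$ forces $i+d\ge 1$ automatically, the surviving terms are those with $i'=i+d$ and $k\le i\le m$ and $k'\le i+d\le m'$, i.e. $\max(k,k'-d)\le i\le\min(m,m'-d)$. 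Each surviving term equals $V_i V_{i+d+d'}^T$, so $z=\sum_{i=k''}^{m''}V_i V_{i+d''}^T$ with $d''$, $k''$, $m''$ as in \eqref{eq:ddoubleprime}--\eqref{eq:mdoubleprime}. If $k''\le m''$ this is, by definition, the element $\langle d'',k'',m''\rangle$ (once \eqref{eq:restrictions-on-kdoubleprime} is checked, so that the triplet is admissible); if $k''>m''$ the sum is empty and $z=\zero$. This gives \eqref{eq:multiplication-infinite}.

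It then remains to verify \eqref{eq:restrictions-on-kdoubleprime}, namely $1-\min(0,d+d')\le\max(k,k'-d)$, using the hypotheses $1-\min(0,d)\le k$ and $1-\min(0,d')\le k'$. I would split on the signs of $d$ and $d'$. If $d\le 0$ then $k'-d\ge k'\ge 1-\min(0,d')=1+\max(0,-d')\ge 1$, and also $1-\min(0,d+d')\le 1-\min(0,d')\le k'\le k'-d$, so $\max(k,k'-d)$ is large enough. If $d\ge 0$ then $-\min(0,d+d')\le -\min(0,d')$, hence $1-\min(0,d+d')\le 1-\min(0,d')\le k'$; but I need a bound by $k'-d$, not $k'$, so in this branch I instead use $k$: from $d\ge 0$ we get $\min(0,d+d')=\min(0,d')$ when $d'\le 0$... actually cleaner is: $1-\min(0,d+d')\le (1-\min(0,d))+(\,-\min(0,d')\,)\le k + (\,-\min(0,d')\,)$, which is not obviously $\le k$ unless $d'\ge 0$. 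So I would finish by a clean case split on all four sign combinations of $d,d'$, in each case picking whichever of $k$ or $k'-d$ the hypotheses control; the only mildly delicate point is the mixed-sign cases, where the subadditivity-type inequality $\min(0,d+d')\ge\min(0,d)+\min(0,d')$ together with the appropriate choice of $k$ versus $k'-d$ does the job. Finally, noncommutativity follows from a single explicit pair of elements (e.g. comparing $\langle 1,1,1\rangle\langle -1,2,2\rangle$ with $\langle -1,2,2\rangle\langle 1,1,1\rangle$), countable infinity of $S_\infty$ is clear from the parametrization \eqref{eq:infinite-s-definition}, and $x\zero=\zero x=\zero$ is immediate since $\zero$ is the zero matrix. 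The main obstacle is purely the $\max/\min$ case analysis for \eqref{eq:restrictions-on-kdoubleprime}; everything else is a routine unwinding of \eqref{eq:vectors-inner-product}.
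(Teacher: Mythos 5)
Your proposal is correct and follows essentially the same route as the paper: both reduce the product to the inner products in \eqref{eq:vectors-inner-product} (you via the rank-one decomposition $\sum_i V_iV_{i+d}^T$, the paper via the rows $V_{i+d}^T$ and columns $V_{j-d'}$ entrywise), arrive at the identical surviving-index condition $\max(k,k'-d)\le i\le\min(m,m'-d)$, and dispose of \eqref{eq:restrictions-on-kdoubleprime} by the same case analysis on the signs of $d$, $d'$, $d''$. The only differences are cosmetic; your explicit noncommuting pair is a nice touch the paper omits.
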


\begin{proof}
The equality $x \zero=\zero x=\zero$ is obvious. As $\langle d,k,m\rangle$ and $\langle d',k',m'\rangle$ both belong to $S_\infty\setminus\{\zero\}$,  (\ref{eq:infinite-s-definition}) gives
\begin{equation*}
    1-\min(0,d)\le k \quad \mathrm{and}\quad 1-\min(0,d')\le k',
\end{equation*}
from which it is easy to show (e.g. by distinguishing cases depending on the signs of $d$, $d'$, and $d''$) that the $d''$ and $k''$ defined in (\ref{eq:ddoubleprime}) and (\ref{eq:kdoubleprime}) satisfy the similar relation (\ref{eq:restrictions-on-kdoubleprime}).

Each entry $x_{ij}$ of the matrix $x=\langle d,k,m\rangle\langle d',k',m'\rangle$ is given by the inner product
\begin{equation}
\label{eq:proof-inf-1}
x_{ij}=\langle d,k,m\rangle_{i*}\langle d',k',m'\rangle_{*j},\quad i,j\in\mathbb{N},
\end{equation}
where $\langle d,k,m \rangle_{i*}$ and $\langle d,k,m \rangle_{*j}$ denote the $i$'th row and  $j$'th column of $\langle d,k,m \rangle$, respectively (our notation for rows and columns is reminiscent of the one in \cite{Meyer}). In terms of the vectors $V_q$ introduced in Section~\ref{section:notation}, it is apparent from (\ref{eq:elements-infinite}) that 
\begin{equation}
\label{eq:proof-inf-2}
   \langle d,k,m\rangle_{i*}=
   \begin{cases}
   V^T_0, \quad i<k\mathrm{\ or\ } i>m,\\
   V^T_{i+d}, \quad k\le i\le m,
   \end{cases}
\end{equation}
and
\begin{equation}
\label{eq:proof-inf-3}
   \langle d',k',m'\rangle_{*j}=
   \begin{cases}
   V_0, \quad j<k'+d'\mathrm{\ or\ } j>m'+d',\\
   V_{j-d'}, \quad k'+d'\le j\le m'+d'.
   \end{cases}
\end{equation}
We can thus use (\ref{eq:vectors-inner-product}) to find the inner product in (\ref{eq:proof-inf-1}). Specifically, (\ref{eq:vectors-inner-product}) and  (\ref{eq:proof-inf-1})--(\ref{eq:proof-inf-3}) tell us that $x_{ij}=1$ iff
\begin{equation}
\label{eq:proof-inf-4}
i+d=j-d'\in\mathbb{N}\quad \mathrm{and}\quad  k\le i\le m\quad \mathrm{and}\quad k'+d'\le j\le m'+d',
    \end{equation}
while $x_{ij}=0$ in any other case. The set of conditions in (\ref{eq:proof-inf-4}) is equivalent to 
\begin{equation}
\label{eq:proof-inf-5}
k''\le i\le m''\quad \mathrm{and}\quad j=i+d''\quad \mathrm{and}\quad i+d\in\mathbb{N},
\end{equation}
where $d''$, $k''$, and $m''$ are defined in (\ref{eq:ddoubleprime})--(\ref{eq:mdoubleprime}); further, it is easy to show that the condition $i+d\in\mathbb{N}$ in (\ref{eq:proof-inf-5}) is superfluous. We have thus arrived at
\begin{equation}
\label{eq:proof-inf-6}
    x_{ij}=
    \begin{cases}
    1,\quad k''\le i\le m''\mathrm{\ and\ } j=i+d'',\\
    0,\quad\mathrm{otherwise.}
    \end{cases}
\end{equation}
When $k''>m''$, (\ref{eq:proof-inf-6}) gives $x_{ij}=0$ for all $i,j\in\mathbb{N}$, so that $x=\zero$ and we have proved the bottom equality in (\ref{eq:multiplication-infinite}).  On the other hand, when $k''\le m''$ we have $x\ne \zero$; furthermore, comparison of (\ref{eq:proof-inf-6}) and (\ref{eq:restrictions-on-kdoubleprime}) to (\ref{eq:elements-infinite}) and (\ref{eq:infinite-s-definition}) gives the top equality, completing our proof of (\ref{eq:multiplication-infinite}) and of closure in $S_\infty$.
\end{proof}

Let us now use Theorem~\ref{th:multiplication-inf} to verify that $S_\infty$ is not a monoid.

\begin{proposition}
\label{prop:no-one}
The semigroup $S_\infty$ has no $\one$.
\end{proposition}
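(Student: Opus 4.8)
The plan is to argue by contradiction: assume $S_\infty$ possesses an identity $\one$ and derive an impossibility from the product formula (\ref{eq:multiplication-infinite}). The first step is to locate $\one$ among the triplets. Since $\zero\langle 0,1,1\rangle=\zero\ne\langle 0,1,1\rangle$ by Theorem~\ref{th:multiplication-inf}, the zero matrix is not an identity, so $\one$ must be a nonzero element, say $\one=\langle d_0,k_0,m_0\rangle$ with $1-\min(0,d_0)\le k_0\le m_0$; in particular $k_0\le m_0$.

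Next I would test $\one$ against a single, carefully chosen element. Let $x=\langle 0,m_0+1,m_0+1\rangle$, which belongs to $S_\infty\setminus\{\zero\}$ since $1=1-\min(0,0)\le m_0+1\le m_0+1$. Applying (\ref{eq:ddoubleprime})--(\ref{eq:mdoubleprime}) to the product $x\one$ (so that $\langle d,k,m\rangle=x$ and $\langle d',k',m'\rangle=\one$) gives $d''=d_0$, $k''=\max(m_0+1,k_0)=m_0+1$ because $k_0\le m_0$, and $m''=\min(m_0+1,m_0)=m_0$. Thus $k''=m_0+1>m_0=m''$, so by (\ref{eq:multiplication-infinite}) we get $x\one=\zero$. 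But $x\ne\zero$, so $x\one\ne x$, contradicting the assumption that $\one$ is a (right) identity. Hence $S_\infty$ has no $\one$.

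I do not expect a genuine obstacle here; the only ingredient is the choice of witness, and the underlying reason it works is conceptual rather than computational: any triplet $\langle d_0,k_0,m_0\rangle$ represents a matrix with only finitely many ones (namely $m_0-k_0+1$ of them, concentrated in rows $k_0,\dots,m_0$), so it cannot fix the ``far-away'' single-one element $\langle 0,m_0+1,m_0+1\rangle$---it zeroes it out instead. If one prefers, the same contradiction follows from requiring $\one$ to fix every $\langle 0,q,q\rangle$ with $q\in\mathbb{N}$, which would force the fixed integer $m_0$ to satisfy $m_0\ge q$ for all $q$; but the single witness above keeps the proof to a couple of lines.
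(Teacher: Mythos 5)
Your proof is correct and follows essentially the same strategy as the paper's: assume $\one=\langle d_0,k_0,m_0\rangle$ and contradict the identity property using the multiplication formula of Theorem~\ref{th:multiplication-inf}. The only difference is cosmetic --- the paper first deduces $d_0=0$ and then tests against $\langle 0,k_0,m_0+1\rangle$ (whose product gets truncated), whereas you skip that step and pick the witness $\langle 0,m_0+1,m_0+1\rangle$ lying outside the support of $\one$, which gets annihilated outright; both rest on the same observation that a matrix with finitely many ones cannot fix elements beyond row $m_0$.
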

\begin{proof}
Suppose that $\one=\langle d,k,m \rangle$. Then (\ref{eq:ddoubleprime}) and $\one \langle d',k',m' \rangle=\langle d',k',m' \rangle$ implies $d=0$, so that $\one=\langle 0,k,m \rangle$.
Thus, by Theorem \ref{th:multiplication-inf}, 
\begin{equation*}
    \one \langle 0,k,m+1 \rangle=\langle 0,k,m \rangle\langle 0,k,m+1 \rangle=\langle 0,k,m \rangle,
\end{equation*}
but on the other hand,
\begin{equation*}
    \one \langle 0,k,m+1 \rangle=\langle 0,k,m+1 \rangle,
\end{equation*}
implying the contradiction $m+1=m$.
\end{proof} 

(It is tempting to say that $\one$ is the diagonal matrix with all entries equal to $1$ but the said matrix does not belong to $S_\infty$ by definition.)

Theorem~\ref{th:multiplication-inf} can yield further properties of $S_\infty$. Our main focus, however, is a similar semigroup $M_n$ of \textit{finite} order, which we now proceed to deal with.

\section{Monoid \texorpdfstring{$M_n$}{Mn} and semigroup \texorpdfstring{$S_n$}{Sn}}
\label{section:finite}

Although it is possible to discuss $M_n$ directly, we avoid calculations by introducing $M_n$ as a subsemigroup of $S_\infty$. Specifically, for any fixed $n$ with  $n=2,3,\ldots$ we define
\begin{equation}
    M_n=\{\zero\}\cup \{x\in S_\infty\setminus\{\zero\}: m\le n-\max(0,d)\}. 
\end{equation}
Thus $M_n$ can be regarded as a semigroup of $n\times n$ matrices, in the sense explained in Section~\ref{section:notation}; that is to say, any element of $M_n$ is an $n\times n$ submatrix (on the top left) of the infinite matrix in  Figure~\ref{fig:matrix_example}. 

For any $x\in M_n\setminus\{\zero\}$, properties (i)--(iii) of Section \ref{section:infinite} continue to hold.  Any nonzero $x\in M_n$ has matrix entries $x_{ij}$ that are given by (\ref{eq:elements-infinite}), but with $i,j\in\{1,2,\ldots,n\}$, and with the meaning of the integers $d$, $k$, $m$ remaining the same.
Clearly, $M_n$ is a subsemigroup of the rook monoid~$R_n$.

In terms of triplets, the set of elements of $M_n$ is
\begin{equation}
\label{eq:m-definition}
\begin{split}
M_n&=\{\zero\}\\ 
&\cup
\{\langle d,k,m\rangle:\  d\in \mathbb{Z};\  k, m\in\mathbb{N}; \ 1-\min(0,d)\le k \le m\le n-\max(0,d)\}.
\end{split}
\end{equation}
In addition to $k,m\in\{1,2,\ldots,n\}$, the restrictions written in (\ref{eq:m-definition}) imply
\begin{equation}
\label{eq:restriction-d}
    -(n-1)\le d\le n-1,
\end{equation}
for any $\langle d,k,m\rangle\in M_n\setminus\{\zero\}$; (\ref{eq:restriction-d}) is the range of the diagonal $d$, as expected.

The multiplication formula (\ref{eq:multiplication-infinite}) remains unaltered,  but $M_n$ is associated with an additional restriction on $d''$ and $m''$. Furthermore---and even if the underlying semigroup $S_\infty$ is \textit{not} a monoid (Proposition~ \ref{prop:no-one})---we now have a $\one$ :

\begin{theorem}
\label{th:multiplication-finite}
$M_n$ is a noncommutative monoid with $\one=\langle 0,1,n\rangle$. The product of two nonzero monoid elements $\langle d,k,m\rangle$ and $\langle d',k',m'\rangle$---see (\ref{eq:m-definition})---is given by 
\begin{equation}
\label{eq:multiplication-finite}
\langle d,k,m\rangle \langle d',k',m'\rangle=
\begin{cases}
\langle d'',k'',m''\rangle,\quad k'' \le m'',\\\
\zero,\quad k''> m'',
\end{cases}
\end{equation}
in which the parameters $d''$, $k''$, and $m''$ are 
\begin{equation}
\label{eq:ddoubleprime-finite}
d''=d+d',
\end{equation}
\begin{equation}
\label{eq:kdoubleprime-finite}
k''=\max(k,k'-d),
\end{equation}
\begin{equation}
\label{eq:mdoubleprime-finite}
m''=\min(m,m'-d).
\end{equation}
Irrespective of whether $k''\le m''$ or not, we have
\begin{equation}
\label{eq:restrictions-on-kdoubleprime-finite}
    1-\min(0,d'')\le k''\quad \mathrm{and}\quad m''\le n-\max(0,d'').
\end{equation}
\end{theorem}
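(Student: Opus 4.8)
The plan is to exploit that $M_n$ was \emph{defined} as a subset of $S_\infty$, so that almost every assertion is inherited from Theorem~\ref{th:multiplication-inf}; the only genuinely new points are (a) that the product of two elements of $M_n$ again lies in $M_n$, which amounts to the second inequality in~(\ref{eq:restrictions-on-kdoubleprime-finite}); (b) the identification $\one=\langle 0,1,n\rangle$; and (c) noncommutativity. Associativity is free, since $M_n$ is a set of matrices under ordinary matrix multiplication (equivalently, it is inherited from $S_\infty$), and finiteness of $\abs{M_n}$ is immediate from~(\ref{eq:m-definition}) and~(\ref{eq:restriction-d}), which trap $d,k,m$ in finite ranges.

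First I would observe that, because $\langle d,k,m\rangle,\langle d',k',m'\rangle\in S_\infty\setminus\{\zero\}$, Theorem~\ref{th:multiplication-inf} applies as is: it gives $x\zero=\zero x=\zero$, the dichotomy in~(\ref{eq:multiplication-finite}) with the formulas~(\ref{eq:ddoubleprime-finite})--(\ref{eq:mdoubleprime-finite}) for $d'',k'',m''$, and the first inequality $1-\min(0,d'')\le k''$ of~(\ref{eq:restrictions-on-kdoubleprime-finite}). Thus the only thing left for closure is to check that, when $k''\le m''$, the triplet $\langle d'',k'',m''\rangle$ satisfies the $M_n$-specific bound $m''\le n-\max(0,d'')$.

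To prove that bound, I would start from the hypotheses carried by~(\ref{eq:m-definition}), namely $n-m\ge\max(0,d)$ and $n-m'\ge\max(0,d')$. Since $m''=\min(m,m'-d)$,
\[
n-m''=\max\!\bigl(n-m,\;n-m'+d\bigr)\ \ge\ \max\!\bigl(\max(0,d),\;\max(0,d')+d\bigr),
\]
and because $\max(0,d')+d\ge d'+d=d''$ while $\max(0,d)\ge 0$, the right-hand side is $\ge\max(0,d'')$; hence $m''\le n-\max(0,d'')$. (A reader who prefers may instead split into the sign cases of $d,d',d''$, which is routine.) Together with the previous paragraph this shows $\langle d'',k'',m''\rangle\in M_n$, so multiplication is closed in $M_n$ and~(\ref{eq:multiplication-finite}) holds.

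Finally, $\langle 0,1,n\rangle\in M_n$ since $1-\min(0,0)=1\le 1\le n\le n-\max(0,0)$, and plugging $d=0,k=1,m=n$ into~(\ref{eq:ddoubleprime-finite})--(\ref{eq:mdoubleprime-finite})---using $k'\ge 1$, $m'\le n$ on one side and $1-\min(0,d')\le k'$, $m'\le n-\max(0,d')$ on the other---gives $\langle 0,1,n\rangle\langle d',k',m'\rangle=\langle d',k',m'\rangle=\langle d',k',m'\rangle\langle 0,1,n\rangle$, so $\one=\langle 0,1,n\rangle$. For noncommutativity it is enough to produce one failing pair: as $n\ge 2$, both $\langle 1,1,1\rangle$ and $\langle -1,2,2\rangle$ lie in $M_n$, and~(\ref{eq:multiplication-finite}) yields $\langle 1,1,1\rangle\langle -1,2,2\rangle=\langle 0,1,1\rangle$ but $\langle -1,2,2\rangle\langle 1,1,1\rangle=\langle 0,2,2\rangle$. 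The one place where something must actually be computed rather than quoted is the bound $m''\le n-\max(0,d'')$, so I regard that max-algebra (or case) estimate as the main---though still routine---obstacle.
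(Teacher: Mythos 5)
Your proposal is correct and follows essentially the same route as the paper: inherit the product formula, the first inequality of (\ref{eq:restrictions-on-kdoubleprime-finite}), and associativity from Theorem~\ref{th:multiplication-inf}, then verify the extra closure bound $m''\le n-\max(0,d'')$ and identify $\one=\langle 0,1,n\rangle$. You merely replace the paper's ``easy to show by distinguishing cases'' with an explicit (and valid) max-algebra estimate, and you add a concrete noncommuting pair, which the paper leaves implicit.
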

\begin{proof}
As $\langle d,k,m\rangle$ and $\langle d',k',m'\rangle$ belong to $M_n\setminus\{\zero\}$, (\ref{eq:m-definition}) gives
\begin{equation*}
    m\le n-\max(0,d)\quad \mathrm{and}\quad m'\le n-\max(0,d'),
\end{equation*}
from which it is easy to show (e.g. by distinguishing cases depending on the signs of $d$, $d'$, and $d''$) that $d''$ and $m''$ satisfy the second inequality in (\ref{eq:restrictions-on-kdoubleprime-finite}).
Since $M_n$ is a subsemigroup of $S_\infty$,  (\ref{eq:multiplication-finite})--(\ref{eq:mdoubleprime-finite}) follow immediately from Theorem~\ref{th:multiplication-inf}, as does the first inequality in (\ref{eq:restrictions-on-kdoubleprime-finite}).  Since $\langle 0,1,n\rangle$ represents the $n\times n$ identity matrix, the equality $\one=\langle 0,1,n\rangle$ is obvious. (Alternatively, we can use (\ref{eq:multiplication-finite})--(\ref{eq:mdoubleprime-finite}) and the restrictions in (\ref{eq:m-definition}) to show that
$x\langle 0,1,n\rangle =\langle 0,1,n\rangle x =x$
for every $x=\langle d,k,m\rangle\in M_n\setminus\{\zero\}$.)
\end{proof}
\begin{remark}
\label{remark:number-of-ones}
Since the number of ones in the matrix $\langle d'',k'',m''\rangle$ equals $m''-k''+1$, the condition $k''\le m''$ in (\ref{eq:multiplication-finite}) assures that the \edit{length of the block of ones} (in any matrix which corresponds to a nonzero product) is positive.
\end{remark}

Reformulating the inequality $k''\le m''$, we obtain alternative conditions for a product to be zero/nonzero.

\begin{corollary} 
\label{corr:nonzero}
Let $x=\langle d,k,m\rangle\in M_n\setminus\{\zero\}$ and $y=\langle d',k',m'\rangle\in M_n\setminus\{\zero\}$, and let $d''$, $k''$, and $m''$ be given by (\ref{eq:ddoubleprime-finite})--(\ref{eq:mdoubleprime-finite}). Then
\begin{equation}
\label{eq:nonzero-product}
xy\ne \zero\iff k''\le m''\iff
k'-m\le d\le m'-k.
\end{equation}
\end{corollary}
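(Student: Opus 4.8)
The first equivalence, $xy\ne\zero\iff k''\le m''$, is nothing more than the statement of the multiplication formula \eqref{eq:multiplication-finite} in Theorem~\ref{th:multiplication-finite}, so I would simply invoke that theorem and spend the rest of the argument on the second equivalence, $k''\le m''\iff k'-m\le d\le m'-k$.

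The plan for the second equivalence is to unfold the definitions \eqref{eq:kdoubleprime-finite} and \eqref{eq:mdoubleprime-finite}, writing $k''=\max(k,k'-d)$ and $m''=\min(m,m'-d)$. Then $k''\le m''$ holds if and only if every term in the max is $\le$ every term in the min, i.e. if and only if all four inequalities
\[
k\le m,\qquad k\le m'-d,\qquad k'-d\le m,\qquad k'-d\le m'-d
\]
hold simultaneously. Here I would observe that the first inequality, $k\le m$, is automatic because $x=\langle d,k,m\rangle$ is a nonzero element of $M_n$, and the last inequality is equivalent to $k'\le m'$, which is automatic because $y=\langle d',k',m'\rangle$ is nonzero (both by the constraint in \eqref{eq:m-definition}). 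Hence $k''\le m''$ reduces to the conjunction of the two middle inequalities, namely $d\le m'-k$ and $k'-m\le d$, which together are exactly $k'-m\le d\le m'-k$.

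There is no real obstacle here; the only point requiring a little care is the elementary fact that $\max(a,b)\le\min(c,e)$ is equivalent to the four-way conjunction $a\le c,\ a\le e,\ b\le c,\ b\le e$, and the bookkeeping that identifies which two of the four inequalities are vacuous by virtue of $x,y\neq\zero$. I would state the four-inequality reduction explicitly, discard the two trivial ones with a one-line justification pointing back to \eqref{eq:m-definition}, and conclude.
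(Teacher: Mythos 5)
Your proposal is correct and follows essentially the same route as the paper's proof: both unfold $\max(k,k'-d)\le\min(m,m'-d)$ into the same four inequalities, discard $k\le m$ and $k'\le m'$ as automatic from \eqref{eq:m-definition}, and read off $k'-m\le d\le m'-k$ from the remaining two. No differences worth noting.
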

\begin{proof}
By (\ref{eq:kdoubleprime-finite}) and (\ref{eq:mdoubleprime-finite}), the $k''\le m''$ in (\ref{eq:multiplication-finite}) is equivalent to the four inequalities
\begin{equation*}
    k\le m\quad\mathrm{and} \quad k\le m'-d\quad\mathrm{and} \quad k'-d\le m\quad\mathrm{and} \quad k'-d\le m'-d.
\end{equation*}
The first and last of the four were known beforehand from (\ref{eq:m-definition}). The second and third give the last condition in (\ref{eq:nonzero-product}).
\end{proof}

A formula for powers, to be used many times in this paper, can be verified using Theorem~\ref{th:multiplication-finite},  Remark~\ref{remark:number-of-ones}, and induction:
\begin{corollary}
\label{corr:powers}
For $x=\langle d,k,m\rangle\in M_n\setminus\{\zero\}$ and $j\in\mathbb{N}$ we have
\begin{equation}
\label{eq:powers-1}
x^j=\begin{cases}
\left\langle d^{(j)},k^{(j)},m^{(j)}\right\rangle,\quad \textrm{if}\quad  k^{(j)}\le m^{(j)},
\\
\zero,\quad \textrm{if}\quad  k^{(j)}>m^{(j)},
\end{cases}
\end{equation}
where
\begin{equation}
\label{eq:powers-2}
d^{(j)}=jd,\quad k^{(j)}=k-(j-1)\min(0,d),\quad m^{(j)}=m-(j-1)\max(0,d).
\end{equation}
Furthermore, as long as $x^j\ne\zero$, the number of ones in the matrix representing $x^j$ equals $m-k+1-(j-1)\abs d$.
\end{corollary}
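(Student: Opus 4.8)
The plan is to induct on $j$, using Theorem~\ref{th:multiplication-finite} to pass from $x^j$ to $x^{j+1}=x^jx$. The base case $j=1$ is immediate: (\ref{eq:powers-2}) gives $d^{(1)}=d$, $k^{(1)}=k$, $m^{(1)}=m$, and since $x\ne\zero$ we have $k\le m$ by (\ref{eq:m-definition}), so $x^1=\langle d,k,m\rangle$ agrees with (\ref{eq:powers-1}) and has $m-k+1$ ones.

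For the inductive step I would first record the elementary identity
\[
k^{(j+1)}-m^{(j+1)}=\bigl(k^{(j)}-m^{(j)}\bigr)+\abs d,
\]
which follows from (\ref{eq:powers-2}) and $\max(0,d)-\min(0,d)=\abs d$. If $k^{(j)}>m^{(j)}$, then the inductive hypothesis gives $x^j=\zero$, hence $x^{j+1}=\zero x=\zero$; and since the displayed identity forces $k^{(j+1)}-m^{(j+1)}\ge 1+\abs d>0$, this is exactly the bottom branch of (\ref{eq:powers-1}) for $j+1$. Otherwise $k^{(j)}\le m^{(j)}$, so $x^j=\langle d^{(j)},k^{(j)},m^{(j)}\rangle$, and I would apply Theorem~\ref{th:multiplication-finite} to the product $\langle d^{(j)},k^{(j)},m^{(j)}\rangle\langle d,k,m\rangle$: formulas (\ref{eq:ddoubleprime-finite})--(\ref{eq:mdoubleprime-finite}) give $d''=d^{(j)}+d=(j+1)d=d^{(j+1)}$, $k''=\max\bigl(k^{(j)},\,k-d^{(j)}\bigr)$, and $m''=\min\bigl(m^{(j)},\,m-d^{(j)}\bigr)$. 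A short case split on the sign of $d$ (using $j\ge 1$ and $d^{(j)}=jd$, so that $\min(0,d)$ and $\max(0,d)$ each equal $0$ or $d$) then collapses these to $k''=k-j\min(0,d)=k^{(j+1)}$ and $m''=m-j\max(0,d)=m^{(j+1)}$. Plugging into (\ref{eq:multiplication-finite}) yields $x^{j+1}=\langle d^{(j+1)},k^{(j+1)},m^{(j+1)}\rangle$ when $k^{(j+1)}\le m^{(j+1)}$ and $x^{j+1}=\zero$ otherwise, closing the induction; that the resulting triplet is a legitimate element of $M_n$ is guaranteed by (\ref{eq:restrictions-on-kdoubleprime-finite}).

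For the count of ones, whenever $x^j\ne\zero$ I would simply note that $\langle d^{(j)},k^{(j)},m^{(j)}\rangle$ contains $m^{(j)}-k^{(j)}+1$ ones (Remark~\ref{remark:number-of-ones}) and substitute (\ref{eq:powers-2}) together with $\max(0,d)-\min(0,d)=\abs d$ to get $m-k+1-(j-1)\abs d$. I do not expect a genuine obstacle: the only mildly fiddly point is the sign bookkeeping that reduces the $\max$/$\min$ in Theorem~\ref{th:multiplication-finite} to the closed forms in (\ref{eq:powers-2}), and that is a routine two-case check.
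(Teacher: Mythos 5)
Your proof is correct and follows exactly the route the paper indicates for this corollary (induction on $j$ combined with Theorem~\ref{th:multiplication-finite} and Remark~\ref{remark:number-of-ones}); the sign bookkeeping reducing $\max\bigl(k^{(j)},k-jd\bigr)$ and $\min\bigl(m^{(j)},m-jd\bigr)$ to the closed forms in (\ref{eq:powers-2}) checks out in both cases, as does the identity $k^{(j+1)}-m^{(j+1)}=\bigl(k^{(j)}-m^{(j)}\bigr)+\abs d$ that keeps the zero branch consistent. The paper leaves these details to the reader, so your write-up is simply a fully worked version of the intended argument.
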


The elements of $M_n$ are idempotent or nilpotent. This is shown in the theorem that follows, which further gives the nilpotency index.
\begin{theorem}
\label{th:idempotent-nilpotent}
An element $\langle d,k,m \rangle\in M_n\setminus \{\zero\}$ is idempotent if $d=0$ and nilpotent if $d\ne 0$. In the latter case, the index $\ell$ of the nilpotent is given (in terms of $\abs d$ and the number $m-k+1$ of ones in the matrix $\langle d,k,m\rangle$) by
\begin{equation}
\label{eq:index-of-nilpotent}
\ell=1+\left\lceil \frac{m-k+1}{\abs d} \right\rceil,
\end{equation}
where $\lceil x \rceil$ denotes the ceiling of $x\in\mathbb{R}$. This index satisfies $2\le\ell\le n$.
\end{theorem}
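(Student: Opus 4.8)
The plan is to read everything off Corollary~\ref{corr:powers}; throughout, write $N := m-k+1 \ge 1$ for the number of ones of $\langle d,k,m\rangle$. First I would dispatch the case $d = 0$: substituting $d = 0$ in (\ref{eq:powers-2}) gives $d^{(2)} = 0$ and, since $\min(0,0) = \max(0,0) = 0$, also $k^{(2)} = k$ and $m^{(2)} = m$, so (\ref{eq:powers-1}) yields $x^2 = \langle 0,k,m\rangle = x$; that is, $x$ is idempotent, settling the first assertion.

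Now suppose $d \ne 0$, so $\abs d \ge 1$. From (\ref{eq:powers-2}) one computes $m^{(j)} - k^{(j)} = (m-k) - (j-1)\abs d$, hence by (\ref{eq:powers-1})
\[
x^j = \zero \iff k^{(j)} > m^{(j)} \iff (j-1)\abs d \ge N ,
\]
which is also the statement that the one-count $N - (j-1)\abs d$ of Corollary~\ref{corr:powers} has dropped to zero or below. Since $\abs d \ge 1$, such $j$ exist, so $x$ is nilpotent, and its index $\ell$ is the least $j$ with $j - 1 \ge N/\abs d$; as $j-1$ ranges over $\mathbb N_0$ this least value is $\lceil N/\abs d\rceil$, giving $\ell = 1 + \lceil N/\abs d\rceil$, which is exactly (\ref{eq:index-of-nilpotent}).

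It remains to prove $2 \le \ell \le n$. The lower bound is immediate from $N \ge 1$ and $\abs d \ge 1$, which force $\lceil N/\abs d\rceil \ge 1$. For the upper bound I would feed the $M_n$-membership inequalities of (\ref{eq:m-definition}), namely $k \ge 1 - \min(0,d)$ and $m \le n - \max(0,d)$, into $N = m - k + 1$ to get
\[
N \le n - \big(\max(0,d) - \min(0,d)\big) = n - \abs d ;
\]
since $\abs d \ge 1$ this is at most $(n-1)\abs d$, so $N/\abs d \le n-1$ and hence $\ell = 1 + \lceil N/\abs d\rceil \le n$.

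I do not expect a real obstacle once Corollary~\ref{corr:powers} is in hand: the only points that need a little care are the equivalence displayed above (rewriting $k^{(j)} > m^{(j)}$ as a bound on $(j-1)\abs d$) and the passage from ``least integer $j-1$ with $j-1 \ge N/\abs d$'' to the ceiling in (\ref{eq:index-of-nilpotent}); and, for the bound $\ell \le n$, observing that the $M_n$-specific restrictions on $k$, $m$, $d$ collapse to the clean estimate $N \le n - \abs d$, which is where membership in $M_n$ (rather than just $S_\infty$) is actually used.
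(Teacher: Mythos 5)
Your proposal is correct and follows essentially the same route as the paper: both cases are read off Corollary~\ref{corr:powers}, the index is obtained as the least $j$ with $k^{(j)}>m^{(j)}$, and the bounds on $\ell$ come from the $M_n$-membership restrictions (your derivation of $N\le n-\abs d$ is just a slightly more explicit version of the paper's observation that numerator and denominator of $\frac{m-k+1}{\abs d}$ lie between $1$ and $n-1$).
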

\begin{proof}
For $d=0$ and $j=2$, Corollary~\ref{corr:powers} gives 
    $\langle 0,k,m\rangle^2=\langle 0,k,m\rangle$
so that $\langle 0,k,m\rangle$ is idempotent. Assume that $d<0$. Then the $k^{(j)}=k+\abs d(j-1)$ in (\ref{eq:powers-2}) increases with $j$ and will thus surpass the constant $m^{(j)}=m$. By (\ref{eq:powers-1}), this means that  $\langle 0,k,m\rangle^j=\zero$ for sufficiently large $j$, so that $\langle d,k,m\rangle$ is nilpotent. The index $\ell$, which is the smallest $j$ for which $k^{(j)}>m$, is then  given by (\ref{eq:index-of-nilpotent}). As $\langle d,k,m\rangle$ is  not a diagonal matrix, the numerator and denominator of $\frac{m-k+1}{\abs d}$ both lie between $1$ and $n-1$. Thus $2\le\ell\le n$. (More generally, the nilpotency index of \textit{any} nonzero nilpotent $n\times n$ matrix satisfies this relation, see p. 190 of  \cite{HornJohnson}.) The proof for $d>0$ is similar.
\end{proof}
As expected, $\ell$ is independent of $n$. Eqn. (\ref{eq:index-of-nilpotent}) further shows that moving the block of ones along the underlying diagonal leaves the nilpotency index unaltered.

$\one$ is the only permutation matrix in $M_n$, which is a submonoid of $R_n$. It thus follows from Lemma \ref{lemma:rook-monoid} that $\one$ is the only element of $M_n$ which has an inverse in the sense of groups. We can strengthen this statement by the proposition that follows, which is not true for $x,y\in R_n$ (but \textit{is} true for any monoid consisting solely of nilpotents and idempotents).

\begin{proposition}
\label{prop:not-a-group}
Let $x,y\in M_n$. We have $x y=\one$ iff $x=y=\one$.
\end{proposition}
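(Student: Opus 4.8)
The plan is to dispose of the ``if'' direction immediately---$\one\one=\one$ because $\one$ is the monoid identity---and then concentrate on the forward implication. So suppose $xy=\one$. Since $x\zero=\zero x=\zero\ne\one$, neither $x$ nor $y$ can equal $\zero$, so I may write $x=\langle d,k,m\rangle$ and $y=\langle d',k',m'\rangle$, with both triplets subject to the restrictions recorded in (\ref{eq:m-definition}).

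The first route I would take is purely computational, via Theorem~\ref{th:multiplication-finite}. The product $xy=\one=\langle 0,1,n\rangle$ is nonzero, so the top case of (\ref{eq:multiplication-finite}) applies and forces $d''=0$, $k''=1$, $m''=n$, where $d''=d+d'$, $k''=\max(k,k'-d)$, $m''=\min(m,m'-d)$. From $\max(k,k'-d)=1$ together with $k\ge1$ I get $k=1$; from $\min(m,m'-d)=n$ together with $m\le n$ I get $m=n$. I would then feed $k=1$ and $m=n$ back into the membership inequalities $1-\min(0,d)\le k$ and $m\le n-\max(0,d)$ coming from (\ref{eq:m-definition}): the first yields $\min(0,d)\ge0$, i.e.\ $d\ge0$, and the second yields $\max(0,d)\le0$, i.e.\ $d\le0$. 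Hence $d=0$ and $x=\langle 0,1,n\rangle=\one$; reading off $\one y=xy=\one$ then gives $y=\one$.

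An alternative, which explains the parenthetical aside in the statement, avoids the matrix structure entirely and uses only Theorem~\ref{th:idempotent-nilpotent}, namely that every nonzero element of $M_n$ is idempotent or nilpotent. If $x$ were nilpotent, then from $xy=\one$ a trivial induction gives $x^{j}y=x^{j-1}$ for all $j\ge1$; taking $j$ equal to the nilpotency index of $x$ yields $x^{j-1}=\zero$, contradicting the minimality of that index (recall it is at least $2$). So $x$ is idempotent, and then $x=x\one=x(xy)=x^2y=xy=\one$, whence $y=\one$ as well. This second argument goes through verbatim in any monoid all of whose elements are idempotents or nilpotents.

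There is essentially no hard step here. The only point requiring a little care is the bookkeeping in the first approach: one must use the genuine $M_n$-membership inequalities of (\ref{eq:m-definition}), not merely $1\le k\le m\le n$, since it is precisely those inequalities that pin down $d=0$ once $k=1$ and $m=n$ are known.
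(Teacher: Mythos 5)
Your proposal is correct, and your second argument is essentially the paper's own proof: the paper rules out $x$ nilpotent by left-multiplying $xy=\one$ by $x^{\ell-1}$ to force $x^{\ell-1}=\zero$, then uses idempotency of $x$ to get $x=x^2y=xy=\one$, exactly as you do; the paper even makes the same parenthetical remark that this works in any monoid consisting solely of idempotents and nilpotents. Your first, computational route is a genuinely different (and equally valid) derivation: reading $d''=0$, $k''=1$, $m''=n$ off the multiplication formula and then squeezing $d=0$ out of the membership inequalities $1-\min(0,d)\le k$ and $m\le n-\max(0,d)$ is airtight, and your closing caveat---that the crude bounds $1\le k\le m\le n$ alone would not pin down $d$---is precisely the point where a careless version of that argument would fail. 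The structural argument buys generality (it transfers to any idempotent-or-nilpotent monoid), while the computational one buys independence from Theorem~\ref{th:idempotent-nilpotent}; either suffices.
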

\begin{proof}
Let $x y=\one$, so that $x\ne \zero$. If $x$ is nilpotent with index $\ell$, then $\ell\ge 2$, so left-multiplication of $x y=\one$ by $x^{\ell-1}$ gives $x^\ell y=x^{\ell-1}$, implying $\zero=x^{\ell-1}$ and contradicting the definition of the index. Thus $x$ is not nilpotent. By Theorem~\ref{th:idempotent-nilpotent}, $x$ is idempotent, so $x y=\one$ gives $xy=x$. Thus $x=xy=\one$ and $\one=x y=\one y=y$. 
\end{proof}
 Multiplication is thus closed within $M_n\setminus\{\one\}$. Consequently,
\begin{corollary}
\label{corr:S-semigroup}
$S_n=M_n\setminus\{\one\}$ is a semigroup and is a subsemigroup of $R_n$.
\end{corollary}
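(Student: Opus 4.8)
The plan is to observe that essentially everything has already been done: the only thing that needs checking is closure of the multiplication of $M_n$ when restricted to $M_n\setminus\{\one\}$, since associativity is automatically inherited from $M_n$ (equivalently, from $R_n$), and a subset of a semigroup that is closed under the operation is itself a semigroup.

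First I would take arbitrary $x,y\in S_n=M_n\setminus\{\one\}$. By Theorem~\ref{th:multiplication-finite}, $M_n$ is a monoid, so in particular $xy\in M_n$. It remains to rule out $xy=\one$. This is exactly the contrapositive of Proposition~\ref{prop:not-a-group}: if $xy=\one$ then $x=y=\one$, contradicting $x,y\in M_n\setminus\{\one\}$. Hence $xy\in M_n\setminus\{\one\}=S_n$, and so $S_n$ is closed under multiplication. Since the operation on $M_n$ is associative and $S_n\subseteq M_n$, the operation is associative on $S_n$; thus $S_n$ is a semigroup. (One should also note in passing that $S_n$ is nonempty — indeed it contains $\zero$, since $\zero\ne\one$ by the standing conventions of Section~\ref{section:notation}, and it contains many other elements such as any $\langle d,k,m\rangle$ with $d\ne 0$.)

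Finally, to see $S_n\sub R_n$, I would invoke transitivity of the subsemigroup relation: Theorem~\ref{th:multiplication-finite} (or the remark preceding it) gives $M_n\sub R_n$, and $S_n\sub M_n$ by the previous paragraph, so $S_n\sub R_n$. I do not expect any genuine obstacle here — the entire content of the corollary is packaged in Proposition~\ref{prop:not-a-group}, and the present statement merely records that $M_n\setminus\{\one\}$ inherits all the semigroup structure. The only point requiring a moment's care is the one already flagged in Section~\ref{section:notation}: $M_n\setminus\{\one\}$ is a genuine semigroup but need not be (and here is not) a submonoid of $M_n$, since it has no identity.
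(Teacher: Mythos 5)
Your proposal is correct and follows exactly the route the paper takes: the paper also deduces closure of multiplication in $M_n\setminus\{\one\}$ directly from Proposition~\ref{prop:not-a-group} (the sentence immediately preceding the corollary), with associativity and the subsemigroup relation to $R_n$ inherited from $M_n$. No gaps; your extra remarks about nonemptiness and the failure of $S_n$ to be a submonoid are consistent with the paper's conventions.
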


Let us note that there is some risk of confusion between four closely related entities dealt with herein; namely, the monoid $M_n=S_n\cup\{\one\}$, the semigroup $S_n=M_n\setminus\{\one\}$, the set $M_n\setminus\{\zero\}$,  and the set $S_n\setminus\{\zero\}$. While sets (consisting of the elements that can be represented by triplets), the last two entities are not semigroups.

We close this section by providing an example that views a familiar semigroup as a subsemigroup of $S_n$.
\begin{example}
\label{ex:mequalsk-example}
Let us specialize  our multiplication formula to elements of the set $B_n$ defined by
 \begin{equation}
\label{eq:b-definition}
B_n=\{\zero\}\cup
\{\langle d,k,m\rangle\in M_n\setminus\{\zero\}:m=k\}.
\end{equation}
With the aid of Corollary \ref{corr:nonzero} (take $m=k$ and $m'=k'$), we see that (\ref{eq:multiplication-finite})--(\ref{eq:mdoubleprime-finite}) reduce to
\begin{equation*}
\label{eq:b-multiplication-finite}
\langle d,k,k\rangle \langle d',k',k'\rangle=
\begin{cases}
\langle d+d',k,k\rangle,\quad k'=k+d,\\
\zero,\quad \mathrm{otherwise}.
\end{cases}
\end{equation*}
Thus multiplication is closed and $B_n$ is a subsemigroup of $S_n$.  Now set $(k,p)=\langle p-k,k,k\rangle$; in this new notation, \edit{the multiplication formula becomes}
\begin{equation}
\label{eq:b-multiplication-finite-new-notation}
\big(k,p\big)\big(k',p'\big)=
\begin{cases}
\big(k,p'\big),\quad k'=p,\\
\zero,\quad \mathrm{otherwise}.
\end{cases}
\end{equation}
For $1\le k,p\le n$, the matrix $(k,p)$ is, by definition, the $n\times n$ matrix $x$ with all elements zero except for element $x_{kp}$, which  equals 1.
Therefore (\ref{eq:b-multiplication-finite-new-notation}) is self-evident and $B_n$ is the well-known combinatorial, $0$-bisimple inverse semigroup associated with Brandt semigroups, see Chapter~3 of \cite{Lawson}. Theorem~\ref{th:idempotent-nilpotent} corroborates that the nonzero elements of $B_n$ are idempotent if $d=0$ ($p=k$) and nilpotent with index $\ell=2$ if $d\ne 0$ ($p\ne k$). We further note that $(k,p)$, which is sometimes called the single-entry matrix, arises in ring theory \cite{Lawson}, as well as other applications \cite{Shimizu}. 
\end{example}

\section{\texorpdfstring{$j$}{j}th roots}

As mentioned in our Introduction, \cite{annin} and \cite{york} discuss $j$th roots in the monoid $R_n$.  In its submonoid $M_n$, the triplet representation renders the problem of finding roots more straightforward: Firstly, the roots of $\zero$ can be computed via Theorem~\ref{th:idempotent-nilpotent}. Secondly, any nonzero idempotent $x$ has a unique $j$th root, namely $x$ itself (as all $y\in M_n$ are idempotent or nilpotent, showing uniqueness is trivial).  Thirdly, the roots of any nonzero nilpotent $x$ can be found from the theorem that follows (the theorem actually holds for any $x\in M_n$, as long as $x\ne \zero$). As it turns out, if a root $y$ of a nonzero $x$ does exist, it is unique.

\begin{theorem}
\label{th:roots}
Let $j\in\mathbb{N}$ and  let $x=\langle d,k,m\rangle\in M_n\setminus\{\zero\}$. Then a $j$th root of $x$ exists in $M_n$ iff
$d$ is an integer multiple of $j$. In this case, the $j$th root is unique in $M_n$ and given by $y=\langle d',k',m'\rangle\in M_n\setminus\{\zero\}$, where
\begin{equation}
\label{eq:root-parameters}
d'=\frac{d}{j},\quad k'=k+(j-1)\min(0,d'),\quad m'=m+(j-1)\max(0,d').
\end{equation}
\end{theorem}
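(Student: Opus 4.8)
The plan is to use Corollary~\ref{corr:powers} as the workhorse, since it already tells us exactly what $y^j$ looks like for any nonzero $y=\langle d',k',m'\rangle$. First I would prove the ``only if'' direction: if $y=\langle d',k',m'\rangle\in M_n\setminus\{\zero\}$ satisfies $y^j=x$, then in particular $y^j\ne\zero$, so by (\ref{eq:powers-1})--(\ref{eq:powers-2}) we have $d=d^{(j)}=jd'$, forcing $d$ to be an integer multiple of $j$. (One also must check $x=\zero$ cannot be a $j$th power here, but that is excluded since $x\ne\zero$ by hypothesis.)

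Next, for the ``if'' direction, suppose $d=jd'$ for some $d'\in\mathbb{Z}$, and define $k'$ and $m'$ by (\ref{eq:root-parameters}). I would first verify that $\langle d',k',m'\rangle$ is a legitimate element of $M_n\setminus\{\zero\}$, i.e.\ that it satisfies the constraints in (\ref{eq:m-definition}): namely $1-\min(0,d')\le k'\le m'\le n-\max(0,d')$. The inequality $k'\le m'$ reduces to $k+(j-1)\min(0,d')\le m+(j-1)\max(0,d')$, which holds because $k\le m$, $j-1\ge0$, and $\min(0,d')\le\max(0,d')$. The outer two inequalities are checked by splitting into the cases $d'\ge0$ and $d'<0$ and using that $x=\langle d,k,m\rangle\in M_n$ already satisfies $1-\min(0,d)\le k\le m\le n-\max(0,d)$ together with $d=jd'$; e.g.\ when $d'<0$ we get $k'=k-(j-1)d'=k+(j-1)|d'|$ and must show $1+|d|\le k+(j-1)|d'|$, which follows from $1-d\le k$, i.e.\ $1+j|d'|\le k$, after noting $j|d'|\ge(j-1)|d'|+|d'|\ge (j-1)|d'|+1$. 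Then I would apply Corollary~\ref{corr:powers} directly to $y=\langle d',k',m'\rangle$ with exponent $j$: one computes $d^{(j)}=jd'=d$, $k^{(j)}=k'-(j-1)\min(0,d')=k$, and $m^{(j)}=m'-(j-1)\max(0,d')=m$, where these last two equalities are exactly the definitions (\ref{eq:root-parameters}) rearranged. Since $k^{(j)}=k\le m=m^{(j)}$, Corollary~\ref{corr:powers} yields $y^j=\langle d,k,m\rangle=x$, establishing existence.

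For uniqueness, suppose $y=\langle d',k',m'\rangle$ and $\tilde y=\langle \tilde d',\tilde k',\tilde m'\rangle$ are both nonzero $j$th roots of $x$. Applying Corollary~\ref{corr:powers} to each and comparing with the formulas above: both must have $d^{(j)}=d$, so $jd'=j\tilde d'$ and hence $d'=\tilde d'$; both must have $k^{(j)}=k$, so $k'-(j-1)\min(0,d')=\tilde k'-(j-1)\min(0,d')$, giving $k'=\tilde k'$; similarly $m'=\tilde m'$. Hence $y=\tilde y$. (That $x$ itself has no ``zero root'' competing for attention is automatic: if $y^j=\zero$ then $y^j\ne x$.)

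I expect the main obstacle to be the case-splitting needed to confirm that the triplet $\langle d',k',m'\rangle$ defined by (\ref{eq:root-parameters}) genuinely lies in $M_n$ — that is, checking the boundary constraints $1-\min(0,d')\le k'$ and $m'\le n-\max(0,d')$. This is purely a bookkeeping exercise splitting on the sign of $d'$ (equivalently of $d$), using $d=jd'$ and the corresponding constraints on $x$, but it is the one place where something could go wrong if the root formula were off; everything else is a direct substitution into Corollary~\ref{corr:powers}. A small auxiliary remark worth making is that the displayed $k'$ and $m'$ are integers (since $d'$ is) and that when $d'=0$ we simply get $y=x$, consistent with the idempotent case discussed in the surrounding text.
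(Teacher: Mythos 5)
Your proposal is correct and follows essentially the same route as the paper: both reduce the problem to Corollary~\ref{corr:powers}, solve the resulting system $jd'=d$, $k^{(j)}=k$, $m^{(j)}=m$ for the unique candidate triplet, and verify that it satisfies the membership constraints of (\ref{eq:m-definition}) using the corresponding constraints on $x$ (the paper fixes $d\ge 0$ and says the case $d<0$ is similar, whereas you split on the sign of $d'$ explicitly, but the substance is identical).
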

\begin{proof}
We take $d\ge 0$. Then (\ref{eq:m-definition}) and $x=\langle d,k,m\rangle\in M_n\setminus\{\zero\}$ yield
\begin{equation}
\label{eq:root-condition-on-x}
    1\le k\le m\le n-d.
\end{equation}
We seek $y\in M_n$ such that $x=y^j$. Since $y\ne \zero$, we can set $y=\langle d', k', m'\rangle\in M_n\setminus\{\zero\}$.
By Corollary~\ref{corr:powers}, $y$ exists iff there exist integers $d'$, $k'$, and $m'$ satisfying
\begin{equation}
\label{eq:root-condition-1}
d=jd',\quad k=k',\quad m=m'-(j-1)d',
\end{equation}
which, because of (\ref{eq:m-definition}), must also satisfy
\begin{equation}
\label{eq:root-condition-2}
    1\le k'\le m'\le n-d'.
\end{equation}
In (\ref{eq:root-condition-1}) and (\ref{eq:root-condition-2}), we took $d'\ge 0$ because no negative $d'$ can satisfy the first equality (\ref{eq:root-condition-1}). If $d$ is \textit{not} an integer multiple of $j$, then the  first equality (\ref{eq:root-condition-1}) cannot be satisfied, so no root $y$ exists. If $d$ \textit{is} an integer multiple of $j$, we can uniquely solve (\ref{eq:root-condition-1}) for $d'$, $k'$, $m'$; the solution thus obtained is given by (\ref{eq:root-parameters}) (when (\ref{eq:root-parameters}) is specialized to $d>0$). Using (\ref{eq:root-condition-on-x}), it is then easy to show that the aforementioned $d'$, $k'$, $m'$ further satisfy (\ref{eq:root-condition-2}). Accordingly, in this case there is a unique root $y$, whose triplet parameters are given by (\ref{eq:root-parameters}). The proof for $d<0$ is similar.
\end{proof}
\begin{remark}
For $d=0$, Theorem~\ref{th:roots} recovers the above-stated result for nonzero idempotents. For nonzero nilpotents and for $j\ge 2$, the theorem and (\ref{eq:restriction-d}) (or, alternatively, the inequality $\ell\le n$ of Theorem~\ref{th:idempotent-nilpotent}) imply that a $j$th root can exist only if $j\le n-1$. For example, $\zero$ is the only nilpotent of $M_2$ that possesses a square root in $M_2$ (in fact, $\zero$ has three square roots in $M_2$, namely the nilpotents of $M_2$, see Example~\ref{ex:r2} or   Proposition~\ref{prop:brandt}). Needless to say, Theorem~\ref{th:roots} does not pertain to roots in $R_n\setminus M_n$, such as the additional square root of $\one\in M_2$ that we noted in Example~\ref{ex:r2}.
\end{remark}

\section{Further properties of \texorpdfstring{$M_n$}{Mn} and \texorpdfstring{$S_n$}{Sn}}

The triplet representation and simple combinatorics allow us to find the number of elements in $S_n$ and $M_n$. As expected, $\abs{M_n}$ is much smaller than $\abs{R_n}$ (see Lemma~\ref{lemma:rook-monoid}) when $n$ is large:

\begin{proposition}
\label{prop:order}
The order $\abs{S_n}$ of $S_n$ equals the square pyramidal number \edit{(sequence A000330 of the OEIS \cite{oeis})}, while $\abs{M_n}=\abs{S_n}+1$. In other words,
\begin{equation}
\label{eq:order}
\abs{S_n}=\abs{M_n}-1
=\frac{n^3}{3}+\frac{n^2}{2}+\frac{n}{6}.
\end{equation}
\end{proposition}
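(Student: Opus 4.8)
The plan is to count the triplets $\langle d,k,m\rangle$ that appear in the description (\ref{eq:m-definition}) of $M_n$, and then note that $\abs{S_n}=\abs{M_n}-1$ because $S_n=M_n\setminus\{\one\}$ (Corollary~\ref{corr:S-semigroup}) and $\one=\langle 0,1,n\rangle$ is a single element. So the real work is counting the nonzero elements of $M_n$, i.e.\ the number of triplets $(d,k,m)$ with $d\in\mathbb{Z}$, $k,m\in\mathbb{N}$, and $1-\min(0,d)\le k\le m\le n-\max(0,d)$, and then adding $1$ for $\zero$.

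First I would split on the sign of $d$. For $d\ge 0$ the constraint reads $1\le k\le m\le n-d$, so for each fixed $d$ the number of pairs $(k,m)$ with $1\le k\le m\le n-d$ is $\binom{n-d+1}{2}$ (the number of two-element multisets from $n-d$ symbols). Summing over $d=0,1,\ldots,n-1$ gives $\sum_{d=0}^{n-1}\binom{n-d+1}{2}=\sum_{r=2}^{n+1}\binom{r}{2}=\binom{n+2}{3}$ by the hockey-stick identity, where I substituted $r=n-d+1$. For $d<0$, writing $d=-e$ with $e\ge 1$, the constraint becomes $1+e\le k\le m\le n$, which is the number of pairs $1\le k'\le m'\le n-e$ after shifting $k'=k-e$, $m'=m-e$; this is again $\binom{n-e+1}{2}$, and summing over $e=1,\ldots,n-1$ gives $\sum_{e=1}^{n-1}\binom{n-e+1}{2}=\sum_{r=2}^{n}\binom{r}{2}=\binom{n+1}{3}$. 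Hence $\abs{M_n}\setminus\{\zero\}$ has $\binom{n+2}{3}+\binom{n+1}{3}$ elements, and $\abs{M_n}=\binom{n+2}{3}+\binom{n+1}{3}+1$.

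It then remains to simplify: $\binom{n+2}{3}+\binom{n+1}{3}=\frac{(n+2)(n+1)n}{6}+\frac{(n+1)n(n-1)}{6}=\frac{(n+1)n\bigl[(n+2)+(n-1)\bigr]}{6}=\frac{(n+1)n(2n+1)}{6}$, which is the well-known square pyramidal number $\sum_{i=1}^n i^2$. Expanding, $\frac{n(n+1)(2n+1)}{6}=\frac{2n^3+3n^2+n}{6}=\frac{n^3}{3}+\frac{n^2}{2}+\frac{n}{6}$. Thus $\abs{S_n}=\abs{M_n}-1=\frac{n^3}{3}+\frac{n^2}{2}+\frac{n}{6}$ and $\abs{M_n}=\abs{S_n}+1$, which is exactly (\ref{eq:order}).

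I do not expect any genuine obstacle here; the only thing to be careful about is the bookkeeping of the ranges of $d$ (that $d$ indeed runs over $-(n-1),\ldots,n-1$, which is (\ref{eq:restriction-d})) and making sure the $d=0$ term is counted once, in the $d\ge 0$ sum, so there is no double counting between the two cases. An alternative, perhaps cleaner, route is to forget $d$ and count directly: a nonzero element is determined by choosing which cells of an $n\times n$ board are filled, and the filled cells form a contiguous block on a single diagonal; the number of length-$\ell$ contiguous blocks on the diagonals of an $n\times n$ matrix is $\sum_{\ell=1}^n \ell^2$ (for block length $\ell$, each of the $n-\ell+1$ diagonals of "length" $\geq \ell$ in each of the two directions contributes, but one must again avoid double-counting the main-diagonal blocks and length-$1$ blocks), which recovers the square pyramidal number directly and explains the name. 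Either way the computation is routine once the case split is set up.
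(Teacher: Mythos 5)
Your proof is correct and is essentially the paper's own argument: the paper's proof offers exactly this direct summation of $1$ over the triplet ranges in (\ref{eq:m-definition}) (as its ``alternative'' computation), and your careful case split on the sign of $d$ with the hockey-stick identity just carries that summation out explicitly. Your closing aside about contiguous diagonal blocks is, despite being a little garbled in its parenthetical bookkeeping, the paper's primary argument in disguise --- each nonzero element corresponds to the diagonal of a square inside the $n\times n$ grid, and the number of such squares is the square pyramidal number.
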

\begin{proof}
Each nonzero element of $M_n$ uniquely defines the northwest/southeast diagonal of a square and vice versa. Therefore $\abs{S_n}=\abs{M_n}-1$ equals the number of squares within an $n\times n$ square grid, which is \cite{oeis} the square pyramidal number written in the right-hand side of (\ref{eq:order}). Alternatively, (\ref{eq:restriction-d}) and (\ref{eq:m-definition})
give
\begin{equation*}
\abs{M_n}-1=\sum_{d=-(n-1)}^{n-1}\,
\sum_{k=1-\min(0,d)}^{n-\max(0,d)}\,
\sum_{m=k}^{n-\max(0,d)}1,
\end{equation*}
from which (\ref{eq:order}) follows by direct summation.
\end{proof}
\begin{example}
 For $n=2$, $M_n=M_2$ consists of all matrices in Example \ref{ex:r2} with the exception of the first matrix $\begin{pmatrix}
0 & 1\\
1 & 0
\end{pmatrix}$ (to which we assigned no symbol), so that  $\abs{M_2}=6$ and $\abs{S_2}=5$, as given by (\ref{eq:order}). 
\end{example}

The semigroup $S_2$ has in fact been well studied \cite{Howie,Brandt}:

\begin{proposition}
\label{prop:brandt}
The Cayley table of $S_2$ is \edit{depicted in Table~\ref{tab:cayley}}. \edit{Furthermore,}
$S_2$ is isomorphic to $B_2$, where $B_2$ is the five-element Brandt semigroup. 
\begin{table}[!h]
\centering
\begin{tabular}{ c| c c c c c}
 {} & $\zero$ & $a$ & $b$ & $e$ & $f$\\ 
 \hline
$\zero$ & $\zero$ & $\zero$ & $\zero$ & $\zero$ & $\zero$\\ 
$a$ & $\zero$ & $\zero$ & $e$ & $\zero$ & $a$\\ 
$b$ & $\zero$ & $f$ & $\zero$ & $b$ & $\zero$\\ 
$e$ & $\zero$ & $a$ & $\zero$ & $e$ & $\zero$\\ 
$f$ & $\zero$ & $\zero$ & $b$ & $\zero$ & $f$\\ 
\end{tabular}
\caption{\edit{The Cayley table of $S_2$. The symbols $e$, $f$, $a$, $b$ are those used in Example \ref{ex:r2}}}
\label{tab:cayley}
\end{table}
\end{proposition}
\begin{proof} For $n=2$, set $e=\langle 0,1,1\rangle$, $f=\langle 0,2,2\rangle$, $a=\langle 1,1,1\rangle$, and $b=\langle -1,2,2\rangle$. The table given above can then be verified using Theorem~\ref{th:multiplication-finite} (or by multiplying the corresponding matrices), and coincides with the table of $B_2$ in p. 32 of \cite{Howie}. The isomorphism is also apparent from our description of $B_n$ in Example~\ref{ex:mequalsk-example}.  
\end{proof}

As seen from its Cayley table, $S_2$ has 3 idempotents (namely, $\zero$, $e$, and $f$) and 3 nilpotents ($\zero$, $a$, and $b$). More generally, the number of idempotents is the frequently-arising \cite{East2010} triangular number $\frac{n(n+1)}{2}$ (sequence A000217 of the OEIS \cite{oeis}):

\begin{proposition}
\label{prop:band}
$S_n=M_n\setminus \{\one\}$ has $\frac{n(n+1)}{2}$ idempotents and $\frac{n^3}{3}-\frac{n}{3}+1$ nilpotents. 
\end{proposition}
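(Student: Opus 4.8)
The plan is to combine the classification of idempotents and nilpotents in Theorem~\ref{th:idempotent-nilpotent} with the order formula of Proposition~\ref{prop:order}, so that the whole argument reduces to elementary counting of triplets. First I would record what we already know: by Theorem~\ref{th:idempotent-nilpotent} together with the trivial facts $\zero^2=\zero$ and $\zero^1=\zero$, the idempotents of $M_n$ are $\zero$ and the triplets $\langle 0,k,m\rangle$ with $1\le k\le m\le n$, while the nilpotents of $M_n$ are $\zero$ (of index $1$) and the triplets $\langle d,k,m\rangle$ with $d\ne 0$. Since $\one=\langle 0,1,n\rangle$ has $d=0$, it is idempotent but not nilpotent; hence passing from $M_n$ to $S_n=M_n\setminus\{\one\}$ removes exactly one idempotent and no nilpotent.

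For the idempotents, I would count the triplets $\langle 0,k,m\rangle\in M_n$, i.e.\ the pairs $(k,m)$ with $1\le k\le m\le n$, of which there are $\binom{n+1}{2}=\frac{n(n+1)}{2}$. Adding $\zero$ gives $\frac{n(n+1)}{2}+1$ idempotents in $M_n$, and removing the idempotent $\one$ leaves $\frac{n(n+1)}{2}$ idempotents in $S_n$, as claimed. For the nilpotents, since $S_n$ and $M_n$ have the same nilpotents it suffices to count the nilpotents of $M_n$. By Proposition~\ref{prop:order}, $M_n$ has $\abs{M_n}-1=\frac{n^3}{3}+\frac{n^2}{2}+\frac{n}{6}$ nonzero elements; exactly $\frac{n(n+1)}{2}$ of them have $d=0$ (the ones just counted), so $\frac{n^3}{3}+\frac{n^2}{2}+\frac{n}{6}-\frac{n(n+1)}{2}=\frac{n^3}{3}-\frac{n}{3}$ of them have $d\ne 0$ and hence are nilpotent. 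Adding $\zero$ gives $\frac{n^3}{3}-\frac{n}{3}+1$ nilpotents in $S_n$.

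I expect there to be no genuine obstacle here; the only thing to be careful about is the bookkeeping for the two distinguished elements $\zero$ and $\one$ (both are "degenerate" triplets that must be handled by hand). As a cross-check one can recover the count of triplets with $d\ne 0$ directly from a triple sum $\sum_{d}\sum_{k}\sum_{m}1$ restricted to $d\ne 0$, exactly in the style of the alternative computation in the proof of Proposition~\ref{prop:order}, and one can verify both formulas against the case $n=2$ recorded after Proposition~\ref{prop:brandt} (three idempotents and three nilpotents).
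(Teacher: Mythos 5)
Your proof is correct and follows essentially the same route as the paper: both classify elements via Theorem~\ref{th:idempotent-nilpotent}, count the triplets $\langle 0,k,m\rangle$ directly to get $\frac{n(n+1)}{2}$, and obtain the nilpotent count by subtracting from the order given in Proposition~\ref{prop:order}. The only difference is cosmetic bookkeeping of $\zero$ and $\one$ (the paper notes that the idempotents of $S_n$ are equinumerous with those of $M_n\setminus\{\zero\}$ and writes the nilpotent count as $\abs{S_n}-N+1$), which matches your accounting exactly.
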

\begin{proof}
Let $N$ be the number of idempotents in $S_n=M_n\setminus\{\one\}$. Then $N$ equals the number of idempotents in $M_n\setminus\{\zero\}$, which is in turn equal to the number of elements $\langle 0,k,m\rangle$. By Theorem~\ref{th:idempotent-nilpotent}, this number is 
\begin{equation*}
N=\sum_{k=1}^{n}\,
\sum_{m=k}^{n}1=\frac{n(n+1)}{2},
\end{equation*}
a result which can also be found using elementary combinatorics. The number of nilpotents, which is $\abs{S_n}-N+1$, then follows from Proposition \ref{prop:order}.
\end{proof}

Proposition \ref{prop:band} tells us that $R_n$ contains more idempotents than does $M_n$, see Lemma \ref{lemma:rook-monoid}. As for nilpotents, all those of $R_2$ belong to $M_2$, but this is no longer true when $n\ne 2$.

In $R_n$, any two inverse elements are transposes of \edit{one another, in the usual sense of matrix transposition. Now $\langle d,k,m\rangle^T$ belongs to $M_n$ and its triplet parameters are obvious. We have thus} arrived at the following proposition. 

\begin{proposition} 
\label{prop:transpose}
Let  $x\in M_n$. The transpose $x^T$ of $x$ is
\begin{equation}
\label{eq:transpose}
x^T=
\begin{cases}
\zero,\quad x=\zero,\\
\langle -d,k+d,m+d\rangle, \quad x=\langle d,k,m \rangle\in M_n\setminus\{\zero\}.
\end{cases}
\end{equation}
\edit{$x^T$ belongs to $M_n$ and is the unique inverse of $x$. Therefore $M_n$ is an inverse monoid and $S_n=M_n\setminus\{\one\}$} is an inverse semigroup.
\end{proposition}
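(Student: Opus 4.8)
The plan is to proceed in three stages: verify the transpose formula (\ref{eq:transpose}) directly from the matrix description of the elements; identify $x^T$ with the unique semigroup inverse of $x$; and then show that the inverse structure restricts from $R_n$ down to $M_n$ and further to $S_n$. For the first stage I would start from (\ref{eq:elements-infinite}): the nonzero entries of $x=\langle d,k,m\rangle$ sit at the positions $(i,i+d)$ with $k\le i\le m$. Transposition sends each such entry to position $(i+d,i)$; setting $i'=i+d$, these are exactly the positions $(i',i'-d)$ with $k+d\le i'\le m+d$, i.e.\ the matrix $\langle -d,k+d,m+d\rangle$. What remains is to check that this triplet really represents an element of $M_n\setminus\{\zero\}$, i.e.\ that it satisfies the constraints in (\ref{eq:m-definition}); this is a short case split on the sign of $d$, since from $1-\min(0,d)\le k\le m\le n-\max(0,d)$ one gets, when $d\ge 0$, the chain $1+d\le k+d\le m+d\le n$, which is $1-\min(0,-d)\le k+d\le m+d\le n-\max(0,-d)$, and the case $d<0$ is symmetric. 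The case $x=\zero$ is trivial.

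For the second stage, the quickest route is Lemma~\ref{lemma:rook-monoid}, which states that $R_n$ is an inverse semigroup whose unique inverse of any element is the matrix transpose. Since $M_n\sub R_n$ and the two multiplications coincide, the element $x^T$ --- which by stage one lies in $M_n$ --- is an inverse of $x$ within $M_n$; and any inverse of $x$ in $M_n$ is in particular an inverse in $R_n$, hence equals $x^T$ by uniqueness in $R_n$. So every $x\in M_n$ has a unique inverse $x^T\in M_n$, and $M_n$ is an inverse monoid. A self-contained alternative avoids Lemma~\ref{lemma:rook-monoid}: Theorem~\ref{th:multiplication-finite} gives $xx^T=\langle 0,k,m\rangle$ and $x^Tx=\langle 0,k+d,m+d\rangle$, whence one more application of the multiplication formula yields $xx^Tx=x$ and $x^Txx^T=x^T$; combined with the observation that the idempotents $\langle 0,k,m\rangle$ commute (immediate from (\ref{eq:kdoubleprime-finite})--(\ref{eq:mdoubleprime-finite}), which make $k''$ and $m''$ symmetric when $d=d'=0$), this shows $M_n$ is regular with commuting idempotents, hence an inverse monoid.

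For the last stage, observe that $\one^T=\langle 0,1,n\rangle^T=\langle 0,1,n\rangle=\one$, and conversely (\ref{eq:transpose}) forces any $x$ with $x^T=\one$ to satisfy $d=0$, $k=1$, $m=n$, i.e.\ $x=\one$; also $\zero^T=\zero\neq\one$. Hence $x\mapsto x^T$ maps $S_n=M_n\setminus\{\one\}$ into itself, so each element of $S_n$ has its (unique) inverse inside $S_n$, and $S_n$ is an inverse semigroup. The main point requiring care --- such ``obstacle'' as there is --- is precisely this last closure statement, together with the sign bookkeeping in stage one: the equivalence $x^T=\one\iff x=\one$ is exactly what guarantees that restricting from $M_n$ to $S_n$ deletes nobody's inverse.
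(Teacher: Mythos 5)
Your proof is correct and follows essentially the same route as the paper, which offers no formal proof but derives the proposition from the preceding remark that inverses in $R_n$ are matrix transposes together with the ``obvious'' triplet expression for $\langle d,k,m\rangle^T$. You fill in the details the paper leaves implicit---notably that $\langle -d,k+d,m+d\rangle$ satisfies the constraints of (\ref{eq:m-definition}) and that $S_n=M_n\setminus\{\one\}$ is closed under transposition---and the self-contained regular-with-commuting-idempotents alternative is a nice bonus, but the core argument is the same.
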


\edit{The corollary that follows will help us in the next section, where we determine Green's relations.}
\edit{
\begin{corollary}
\label{corr:xtimesinversex}
In $M_n$, the products $xx^T$ and $x^Tx$ of an element with its inverse are given by 
\begin{equation}
\label{eq:productxtimesinversex}
xx^T=
\begin{cases}
\zero,\quad x=\zero,\\
\langle 0,k,m\rangle, \quad x=\langle d,k,m \rangle\in M_n\setminus\{\zero\},
\end{cases}
\end{equation}
\begin{equation}
\label{eq:productinversextimesx}
x^Tx=
\begin{cases}
\zero,\quad x=\zero,\\
\langle 0,k+d,m+d\rangle, \quad x=\langle d,k,m \rangle\in M_n\setminus\{\zero\}. 
\end{cases}
\end{equation}
\end{corollary}}

While $R_n$ and $S_n$ are not abelian semigroups, some nonzero elements do commute, e.g., $e$ and $f$ of $S_2$, see Proposition \ref{prop:brandt}. What follows is a necessary and sufficient condition for two nonzero $x,y\in M_n$ to commute ($xy=yx$).
\begin{proposition}
\label{prop:commute-condition}
Let $x,y\in M_n\setminus\{\zero\}$ with $x=\langle d,k,m\rangle$ and $y=\langle d',k',m'\rangle$. Then
\begin{itemize}
    \item Case 1:
$xy=yx\ne\zero$ iff 
\begin{equation}
\label{eq:commute-condition-nonzero}
\max(k,k'-d)=\max(k',k-d')\le \min(m,m'-d)=\min(m',m-d').
\end{equation}
\item Case 2:
$xy=yx=\zero$ iff 
\begin{equation}
\label{eq:commute-condition-zero}
\max(k,k'-d)>\min(m,m'-d)\quad\mathrm{and}\quad    \max(k',k-d')>\min(m',m-d').
\end{equation}
\end{itemize}
\end{proposition}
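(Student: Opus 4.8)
The plan is to read off the triplet parameters of both products $xy$ and $yx$ from the multiplication formula of Theorem~\ref{th:multiplication-finite}, and then compare them. Writing $xy=\langle d,k,m\rangle\langle d',k',m'\rangle$, Theorem~\ref{th:multiplication-finite} says that, when $xy\ne\zero$, its triplet is $\langle d+d',\ \max(k,k'-d),\ \min(m,m'-d)\rangle$; writing $yx=\langle d',k',m'\rangle\langle d,k,m\rangle$ and interchanging the roles of the two factors, its triplet is $\langle d+d',\ \max(k',k-d'),\ \min(m',m-d')\rangle$. The crucial observation is that the first (diagonal) coordinate equals $d+d'$ in both orders, so $xy$ and $yx$ always lie on the same diagonal; the only things that can distinguish them are whether each product vanishes and, when it does not, the endpoints $k''$ and $m''$ of the block of ones.

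Granting this, the rest is a matter of unwinding the logic, and I would organize it as follows. Since $xy$ and $yx$ share the same $d''$, the equality $xy=yx$ holds in exactly one of two mutually exclusive ways: either both products equal $\zero$, or both are nonzero and agree in their endpoints. For the first way, $xy=\zero$ is, by Theorem~\ref{th:multiplication-finite}, the same as $\max(k,k'-d)>\min(m,m'-d)$ and $yx=\zero$ the same as $\max(k',k-d')>\min(m',m-d')$; conjoining these gives precisely (\ref{eq:commute-condition-zero}), which is Case~2. For the second way, $xy\ne\zero$ means $\max(k,k'-d)\le\min(m,m'-d)$, $yx\ne\zero$ means $\max(k',k-d')\le\min(m',m-d')$, and ``same endpoints'' adds $\max(k,k'-d)=\max(k',k-d')$ together with $\min(m,m'-d)=\min(m',m-d')$; these four relations package into exactly the one-line chain (\ref{eq:commute-condition-nonzero}), which is Case~1. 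Each implication is reversible, since every displayed condition directly forces the situation it came from.

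I do not anticipate a real obstacle; the only place that needs a little care is the bookkeeping in Case~1, where one must verify that the chain $\max(k,k'-d)=\max(k',k-d')\le\min(m,m'-d)=\min(m',m-d')$ is genuinely equivalent to ``both products are nonzero and agree in $k''$ and in $m''$.'' In particular, one should notice that the middle inequality, combined with the two flanking equalities, forces $k''\le m''$ for \emph{both} $xy$ and $yx$ by transitivity, so a single $\le$ suffices. It is also worth remarking explicitly that Cases~1 and~2 exhaust all commuting pairs, because a zero matrix can never equal a nonzero one; hence the proposition does classify precisely when two nonzero elements of $M_n$ commute.
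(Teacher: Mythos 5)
Your proposal is correct and takes essentially the same approach as the paper, whose proof is simply ``apply Corollary~\ref{corr:nonzero}, interchange the roles of $x$ and $y$, and apply again''; you have just spelled out explicitly the comparison of the triplets of $xy$ and $yx$ (same diagonal $d+d'$, injectivity of the triplet representation) that the paper leaves implicit.
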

\begin{proof}
Apply Corollary~\ref{corr:nonzero}, interchange the roles of $x$ and $y$, and apply again.
\end{proof}
In the special case $d=d'=0$, the conditions of either Case~1 or Case~2 are satisfied for all $k,m,k',m'$ (as expected, because diagonal matrices commute, see also Section~\ref{section:mdn}).  Thus the idempotents of $M_n$ and $S_n$ commute, which is tantamount to saying that $M_n$ and $S_n$ are orthodox semigroups. This was known beforehand, because all inverse semigroups are orthodox.
 
\section{Green's relations on \texorpdfstring{$M_n$}{Mn}; Ideals of \texorpdfstring{$M_n$}{Mn}}
\label{section:green}
\edit{The theorem below gives Green's relations on $M_n$. For a matrix $x\in M_n$, it is helpful to use $\mathrm{rnk}(x)$ to denote the length of the block of ones, which equals the rank of the corresponding partial injective transformation. We thus define
\begin{equation}
\label{eq:rand}
\mathrm{rnk}(x)=
\begin{cases}
0,\quad x=\zero,\\
m-k+1, \quad x=\langle d,k,m \rangle\in M_n\setminus\{\zero\},
\end{cases}
\end{equation}
with $0\le\mathrm{rnk}(x)\le n$. By Theorem~\ref{th:multiplication-finite} (or by the identical result in $R_n$\cite{Ganyushkin}),
\begin{equation}
\label{eq:rank-property}
    \mathrm{rnk}(xy)\le \min\left(\mathrm{rnk}(x),\mathrm{rnk}(y)\right),\quad x,y\in M_n.
\end{equation}
}
\edit{
\begin{theorem}
\label{th:green}
    In the inverse monoid $M_n$, Green's relations for any two nonzero elements $x=\langle d,k,m\rangle$ and $y=\langle d',k',m'\rangle$ are as follows.
    \begin{equation}
    \label{eq:green-R}
        x\mathcal{R}y\iff k=k'\ \mathrm{and}\ m=m',
    \end{equation}
    \begin{equation}
    \label{eq:green-L}
       x\mathcal{L}y\iff k+d=k'+d'\ \mathrm{and}\ m+d=m'+d',
    \end{equation}
    \begin{equation}
    \label{eq:green-H}
        x\mathcal{H}y\iff\ x=y,
    \end{equation}
    \begin{equation}
    \label{eq:green-D-or-J}
        x\mathcal{D}y\iff x\mathcal{J}y\iff\ m-k=m'-k'\iff \mathrm{rnk}(x)=\mathrm{rnk}(y).
    \end{equation}
    In all cases, $\zero$ forms a class of its own,
    \begin{equation}
    \label{eq:green-zero}
    R_\zero=L_\zero=H_\zero=D_\zero=J_\zero=\{\zero\}.
    \end{equation}
\end{theorem}
}
\begin{proof}
    \edit{By the definitions  of $\mathcal{R}$ and $\mathcal{L}$ in inverse semigroups \cite{Lawson},  $x\mathcal{R}y$ iff $xx^T=yy^T$, and $x\mathcal{L}y$ iff $x^Tx=y^Ty$. Thus (\ref{eq:green-R}) and $R_\zero=\{\zero\}$ follow from (\ref{eq:productxtimesinversex}), while (\ref{eq:green-L}) and $L_\zero=\{\zero\}$ follow from  (\ref{eq:productinversextimesx}). The definition $\mathcal{H}=\mathcal{R}\cap \mathcal{L}$ then implies $H_\zero=\{\zero\}$. It further implies that $x\mathcal{H} y$ iff the conditions in  (\ref{eq:green-R}) and (\ref{eq:green-L}) hold simultaneously.  Thus $d=d'$ which, together with (\ref{eq:green-R}), gives (\ref{eq:green-H}).} 
    
    \edit{
    Alternatively, (\ref{eq:green-R})--(\ref{eq:green-H}) (as well as the pertinent relations in (\ref{eq:green-zero})) can be shown via the corresponding Green's relations on the inverse semigroup $R_n$ \cite{Ganyushkin,Lawson}, which are inherited \cite{Howie,Lawson}  by the inverse subsemigroup $M_n$.}
    
    \edit{We now turn to the condition for $\mathcal{D}$ in  (\ref{eq:green-D-or-J}). 
    As long as $m-k=m'-k'$, we can define a triplet $z=\langle d'',k'',m''\rangle$ by 
    \begin{equation}
    \label{eq:z-construct}
        z=\langle k+d-k',k',m'\rangle=
        \langle m+d-m',k',m'\rangle.
    \end{equation}
     Now write the conditions in (\ref{eq:m-definition}) for $d$, $k$, $m$, and again for $d'$, $k'$, $m'$. Upon invoking $m-k=m'-k'$, we can easily deduce identical conditions for $d''$, $k''$, $m''$. Thus $z\in M_n$. Furthermore, our $z$ satisfies $z\mathcal{R}y$ by (\ref{eq:green-R}) and  $x\mathcal{L}z$ by (\ref{eq:green-L}). By  the definition of $\mathcal{D}$, the existence of such a $z$ means $x\mathcal{D}y$.}
     
     \edit{In the case $x=\zero$, the equalities $R_\zero=L_\zero=\{\zero\}$ tell us that $x\mathcal{L}z$ and $z\mathcal{R}y$ can only hold when $z=y=\zero$. Thus $D_\zero=\zero$.}
     
     \edit{The assertions concerning $\mathcal{J}$ follow from the equality $\mathcal{J}=\mathcal{D}$, which holds because  the semigroup $M_n$ is of finite order \cite{Howie}.}
\end{proof}

\edit{Theorem~\ref{th:green} implies that the inverse monoid $M_n$ is combinatorial  (as $\mathcal{H}$ is the equality relation \cite{Lawson}), and states that $x\mathcal{R}y$ ($x\mathcal{L}y$) amounts to a horizontal (vertical) translation of the block of ones in the matrices representing $x$ and $y$. Furthermore, the $z$ of (\ref{eq:z-construct}) has a block of ones which is, concurrently, a horizontal translation of the block of $y$ and a vertical translation of the block of $x$. This explains (\ref{eq:green-D-or-J}), which states that all $\mathcal{D}$- (or $\mathcal{J}$-) related matrices have blocks of ones with equal lengths, i.e., identical ranks.}

\edit{The theorem that follows (see \cite{Ganyushkin} for similar results pertaining to $R_n$) uses Theorem~\ref{th:green} and (\ref{eq:rank-property}) to describe all ideals of $M_n$.
\begin{theorem}
\label{th:ideals}
$M_n$ has  $n+1$ (two-sided) ideals $I_0, I_1,\ldots ,I_n$, all of which are principal. 
Ideal $I_r$ can be found from
\begin{equation}
\label{eq:ideals-given-by}
    I_r=M_nxM_n=\{y\in M_n:\mathrm{rnk}(y)\le r\},\quad r=0,1,\ldots,n,
\end{equation}
where $x$ is any element of $M_n$ with $\mathrm{rnk}(x)=r$.
The ideals form a chain according to
\begin{equation}
\label{eq:chain}
    \{\zero\}=I_0\subset I_1\subset\ldots\subset I_n=M_n.
\end{equation}
\end{theorem}
}

\begin{proof}
\edit{
The results on $\mathcal{J}$-classes in Theorem~\ref{th:green} imply $M_nxM_n=M_nyM_n$ for any $y$ with $\mathrm{rnk}(y)=\mathrm{rnk}(x)$. Consequently, the principal ideal $M_nxM_n$ is completely specified by $r=\mathrm{rnk}(x)$, justifying the notation $I_r=M_nxM_n$ that we used in (\ref{eq:ideals-given-by}).} 

\edit{
In particular, we can choose a diagonal matrix $y$ whose block of ones starts at the top left. Thus there are $n+1$ principal ideals $I_r$, which can be written as
\begin{equation}
\label{eq:ideals-definition-via-diagonal}
I_r=
\begin{cases}
\{\zero\},\quad r=0,\\
M_n\langle 0,1,r\rangle M_n, \quad r=1,2,\ldots,n.
\end{cases}
\end{equation}
Relation (\ref{eq:chain}) is a consequence of  (\ref{eq:ideals-definition-via-diagonal}) and
\begin{equation*}
\label{eq:rm1-r-rm1}
    \langle 0,1,r-1\rangle=
    \langle 0,1,r\rangle 
    \langle 0,1,r-1\rangle, \quad r=2,3,\ldots,n.
\end{equation*}
In (\ref{eq:chain}), we wrote \textit{strict} inclusions as (\ref{eq:ideals-definition-via-diagonal}) and (\ref{eq:rank-property}) give $\mathrm{rnk}(x)\le r$ for any $x\in I_r$, so
\begin{equation}
\label{eq:strict-inclusion}
\langle 0,1,r\rangle\notin I_{r-1},\quad r=1,2,\ldots,n. 
\end{equation}
$I_r=\{y\in M_n:\mathrm{rnk}(y)\le r\}$ follows easily from (\ref{eq:chain})--(\ref{eq:strict-inclusion}). 
Finally, \textit{any} ideal $I$ is the union of some $I_r$ because $I=\cup_{x\in I} M_nxM_n$. Thus, by (\ref{eq:chain}), all ideas are principal.
}
\end{proof}
\edit{
\begin{example}
$M_2$ has three ideals. By the second expression in  (\ref{eq:ideals-given-by}), they are given by
\begin{equation*}
    I_0=\{\zero\}, \quad I_1=\{\zero,e,f,a,b\},\quad I_2=\{\zero,e,f,a,b,\one\}=M_2,
\end{equation*}
where we used the notation of Example~\ref{ex:r2} and Proposition~\ref{prop:brandt} for the monoid elements. Note that Theorem~\ref{th:green}, (\ref{eq:ideals-definition-via-diagonal}), $e=\langle 0,1,1\rangle$, and $\one=\langle 0,1,2\rangle$ additionally imply
\begin{equation*}
    I_0=J_\zero, \quad I_1=M_2eM_2=I_0\cup J_e,\quad I_2=M_2\one M_2=I_1\cup J_\one.
\end{equation*}
\end{example}
}

\section{On the numbers of zero and nonzero products}
\label{section: numer-of-zero-products}

The set $S_2\setminus \{\zero\}$, which has $4$ elements, gives rise to $4^2=16$ possible products. The last four rows and columns of the table in Proposition~\ref{prop:brandt} tell us that $8$ of these products are zero and $8$ are nonzero. This case ($n=2$) seems to be the only one in which the numbers of zero and nonzero products are equal; e.g. $S_3\setminus \{\zero\}$, which has 13 elements, gives rise to 91 nonzero products, about $r(3)=53.9\%$ of the total. We have investigated the more general ratio $r(n)$ in the following manner.

Let  $\psi(n)$ be the number of nonzero products arising from the set $M_n\setminus\{\zero\}$. By (\ref{eq:m-definition}) and  Corollary~\ref{corr:nonzero}, $\psi(n)$ is given by
\begin{equation}
\label{eq:psi-sextuple}
\begin{split}
\psi(n)=\sum_{d=1-n}^{n-1}\,
&\sum_{k=1-\min(0,d)}^{n-\max(0,d)}\,
\sum_{m=k}^{n-\max(0,d)}\\
&\sum_{d'=1-n}^{n-1}\,
\sum_{k'=1-\min(0,d')}^{\min\left(m+d,n-\max(0,d')\right)}\,
\sum_{m'=\max(k',k+d)}^{n-\max(0,d')}1.
\end{split}
\end{equation}
Now $2\abs{S_n}-1$ of these nonzero products involve $\one$, so that $S_n\setminus \{\zero\}=M_n\setminus\{\zero,\one\}$  gives rise to  $\psi(n)-2\abs{S_n}+1$ nonzero products. As the total number of products is $(\abs{S_n}-1)^2$, the ratio $r(n)$ is
\begin{equation}
\label{eq:ratio}
    r(n)=\frac{\psi(n)-2\abs{S_n}+1}{(\abs{S_n}-1)^2}.
\end{equation}
Although were not able to analytically evaluate the sextuple sum giving $\psi(n)$, numerical  evaluation and use of the On-Line Encyclopedia of Integer Sequences (OEIS) suggested a connection to the so-called Polynexus numbers \edit{(sequence A083200 of the OEIS \cite{oeis})}, as stated in the following conjecture.
\begin{conjecture}
\label{conjecture:psi}
For $n\in\mathbb{N}$, $\psi(n)$ equals the $(n+1)$th Polynexus number of order $7$ \cite{oeis}, so that 
\begin{equation}
\label{eq:psi-polynexus}
    \psi(n)=\frac{(n+1)^7-n^7-(n+1)^3+n^3}{120}.
\end{equation}
\end{conjecture}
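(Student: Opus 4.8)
The plan is to reinterpret $\psi(n)$ as a weighted count of intersecting pairs of intervals, which makes the sextuple sum collapse. Under the triplet correspondence, the nonzero element $\langle d,k,m\rangle$ has its ones in the rows $\{k,\dots,m\}$ and in the columns $\{k+d,\dots,m+d\}$; writing $R_x:=[k,m]$ for the set of occupied rows and $C_x:=[k+d,m+d]$ for the set of occupied columns, the map $x\mapsto(R_x,C_x)$ is a bijection from $M_n\setminus\{\zero\}$ onto the set of pairs $(R,C)$ of equal-length subintervals of $\{1,\dots,n\}$ (the inequalities $1-\min(0,d)\le k\le m\le n-\max(0,d)$ of (\ref{eq:m-definition}) translate precisely into $R\subseteq\{1,\dots,n\}$ and $C\subseteq\{1,\dots,n\}$). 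In particular there are exactly $n-a+1$ nonzero elements with a prescribed column-range of length $a$, and likewise $n-a+1$ with a prescribed row-range of length $a$. Reading Corollary~\ref{corr:nonzero}, $\langle d,k,m\rangle\langle d',k',m'\rangle\neq\zero$ iff $k'-m\le d\le m'-k$, i.e.\ iff $\min R_y\le\max C_x$ and $\min C_x\le\max R_y$, which is exactly $C_x\cap R_y\neq\emptyset$. Summing $1$ over all pairs of nonzero elements with nonzero product and using that only the lengths of $R_x$ and of $C_y$ enter the condition, one obtains
\[
\psi(n)=\sum_{\substack{I,J\subseteq\{1,\dots,n\}\\ I\cap J\neq\emptyset}}(n-\abs I+1)(n-\abs J+1),
\]
with $I,J$ ranging over nonempty intervals.

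I would then pass to the complement. Put $T(n):=\sum_I(n-\abs I+1)=\sum_{a=1}^n(n-a+1)^2$, which is the square pyramidal number and, by Proposition~\ref{prop:order}, equals $\abs{S_n}$. Since the unrestricted double sum $\sum_{I,J}(n-\abs I+1)(n-\abs J+1)$ factors as $T(n)^2$, we have
\[
\psi(n)=T(n)^2-Z(n),\qquad Z(n):=\sum_{\substack{I,J\subseteq\{1,\dots,n\}\\ I\cap J=\emptyset}}(n-\abs I+1)(n-\abs J+1).
\]
Two disjoint intervals of $\{1,\dots,n\}$ are linearly ordered, and the two orders contribute equally to $Z(n)$ after interchanging $I$ and $J$; moreover the number of ordered pairs of disjoint intervals of lengths $a$ and $b$ with the first lying to the left of the second equals the number of ways of writing $n-a-b$ as an ordered sum of three nonnegative integers (left margin, gap, right margin), namely $\binom{n-a-b+2}{2}$ when $a+b\le n$ and $0$ otherwise. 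Hence
\[
Z(n)=2\sum_{\substack{a,b\ge1\\ a+b\le n}}(n-a+1)(n-b+1)\binom{n-a-b+2}{2}.
\]

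From here I would take the short route to finish. The summand just displayed is a polynomial in $a,b,n$, so summing it over the lattice triangle $\{a,b\ge1,\ a+b\le n\}$ — first over $b\in\{1,\dots,n-a\}$, then over $a\in\{1,\dots,n-1\}$ — yields, by Faulhaber's power-sum formulas, a genuine polynomial in $n$ (no quasi-polynomial behavior, since both summation limits are affine in the remaining variables), of degree at most $6$. Therefore $\psi(n)=T(n)^2-Z(n)$ is a polynomial in $n$ of degree at most $6$, and so is the right-hand side of (\ref{eq:psi-polynexus}), which is evidently a polynomial of degree $6$ once one notes that the $n^7$ and $n^3$ terms cancel. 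Two polynomials of degree at most $6$ agreeing at seven points coincide, so it suffices to verify (\ref{eq:psi-polynexus}) for $n=2,3,\dots,8$; the values $\psi(2)=17$ and $\psi(3)=118$ can be read off already from Section~\ref{section: numer-of-zero-products}, and the remaining five are a finite check. A longer but fully explicit alternative is to evaluate the double sum for $Z(n)$ by standard power-sum identities and simplify $T(n)^2-Z(n)$ directly; I expect this to produce $\tfrac{1}{120}\bigl(7n^6+21n^5+35n^4+35n^3+18n^2+4n\bigr)$, which is exactly the claimed closed form.

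The conceptual step — and presumably why the sextuple sum has resisted direct evaluation — is the equivalence ``$xy\neq\zero$'' $\iff$ ``the column-range of $x$ meets the row-range of $y$,'' which converts $\psi(n)$ into a weighted count of intersecting interval pairs; after that, the rest is bookkeeping. I would be careful about two points. First, that $(R,C)\mapsto x$ really is a bijection onto $M_n\setminus\{\zero\}$: this rests on the inequalities of (\ref{eq:m-definition}) matching ``$R,C\subseteq\{1,\dots,n\}$'' on the nose, which needs the usual case split on the sign of $d$. Second, that $\binom{n-a-b+2}{2}$ counts placements only when $a+b\le n$ — read as a polynomial in $a,b,n$ it is nonzero once $a+b\ge n+3$, so the summation range in the formula for $Z(n)$ cannot simply be discarded. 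Neither point is difficult, but each is easy to get wrong.
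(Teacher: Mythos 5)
Your argument is correct, and it does something the paper itself does not: the paper states (\ref{eq:psi-polynexus}) only as a conjecture, supported by numerical verification for $n\le 70$, whereas your reduction actually proves it. The decisive step is the observation that, under the triplet-to-interval dictionary, Corollary~\ref{corr:nonzero} says precisely that $xy\ne\zero$ iff the column-range of $x$ meets the row-range of $y$ (in transformation language, $\mathrm{im}(x)\cap\mathrm{dom}(y)\ne\emptyset$); since the number of nonzero elements with a prescribed column-range (resp.\ row-range) of length $a$ is $n-a+1$, the sextuple sum (\ref{eq:psi-sextuple}) collapses to a weighted count of intersecting interval pairs, and the complementary count $Z(n)$ over disjoint pairs is elementary. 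Your bijection, the intersection criterion, the stars-and-bars count $\binom{n-a-b+2}{2}$, and the degree bound $\deg\psi\le 6$ all check out; I confirmed that your formulas give $\psi(n)=17,\,118,\,514,\,1681$ for $n=2,3,4,5$, matching (\ref{eq:psi-polynexus}). The only loose end is that you explicitly verify just two of the seven interpolation points needed; the remaining five are a mechanical computation, and in any case the paper's own verification for all $n\le 70$ supplies far more than seven agreement points, so your polynomiality argument combined with that data already settles the conjecture. If you write this up, do spell out the case split showing that the inequalities in (\ref{eq:m-definition}) are exactly ``row-range and column-range both lie in $\{1,\dots,n\}$,'' and keep the constraint $a+b\le n$ on the summation range explicit until after the Faulhaber step, for the reason you yourself flag.
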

This conjecture was verified for all $n\le 70$. Provided that it is true, (\ref{eq:ratio}), (\ref{eq:order}), and (\ref{eq:psi-polynexus}) give a closed-form expression for $r(n)$: 

\begin{conditional proposition}
\label{cond:conditional}
If Conjecture~\ref{conjecture:psi} is true, then the ratio $r(n)$ of nonzero products in $S_n\setminus\{\zero\}$ to the total number of products in $S_n\setminus\{\zero\}$ is given by
\begin{equation}
    r(n)=\frac{3(7n^5+28n^4+63n^3+18n^2-84n-120)}{10(n-1)(2n^2+5n+6)^2}.
\end{equation}
Accordingly, as $n\to\infty$, 
\begin{equation}
    r(n)=\frac{21}{40}+\frac{147}{160}\,\frac{1}{n^2}+O\left(\frac{1}{n^3}\right).
\end{equation}
\end{conditional proposition}
Thus when $n$ is large, $r(n)$ approaches  $52.5\%=\frac{21}{40}$ monotonically from above. Numerical results are given in Figure~\ref{fig:ratio}, where we observe a peak at $n=4$.

\begin{figure}[h]
    \centering
    \includegraphics[width=.55\textwidth]{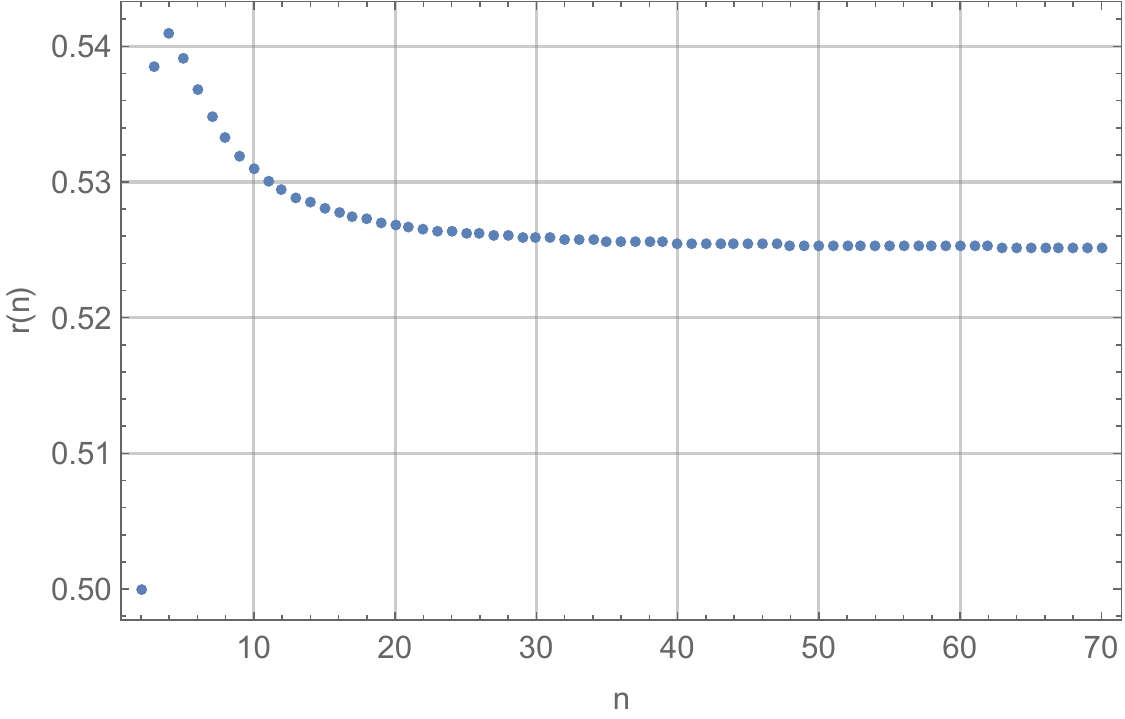}
    \caption{The ratio $r(n)$ for $n=2,3,\ldots,70$}
    \label{fig:ratio}
\end{figure}

\begin{remark}
The three sums in the second line of (\ref{eq:psi-sextuple}) form a triple sum which depends on $n$, $d$, $k$, and $m$. Given an $x=\langle d, k,m\rangle\in M_n\setminus\{\zero\}$, the triple sum is the number of $y=\langle d', k',m'\rangle\in M_n\setminus\{\zero\}$ for which $xy\ne \zero$.
Conversely, if we are given a $y$, it is easy to formulate a similar triple sum yielding the number of $x$ for which $xy\ne \zero$.
\end{remark}

\section{Submonoids and Subsemigroups}
\label{section:submonoids}

This section examines certain subsemigroups of $M_n$. To begin with, 
it can be verified from Theorem \ref{th:multiplication-finite} that, for $n\ge 3$,  the following subset of $M_n$
\begin{equation*}
    \{\zero\}\cup \{\langle d,k,m\rangle\in M_n\setminus\{\zero\}: m< n-\max(0,d)\}
\end{equation*}
is closed under multiplication. This set consists of $n\times n$ matrices whose elements in the $n$'th row and the $n$'th column are all zero. Thus closure alternatively follows from the general discussions in Section~\ref{section:notation} and amounts to 
\begin{equation}
    M_j\sub M_n,\quad 2\le j< n,
\end{equation}
where $\sub$ means ``is a subsemigroup of''. 
Note that the $\one$ of $M_n$ does not belong to $M_j$ ($M_j$ has a different $\one$ than does $M_n$) so that the monoids $M_2,\ldots, M_{n-1}$ are \textit{not} submonoids of $M_n$.

Next consider the subsets of $M_n$ that are \edit{described in Table~\ref{tab:Mn-subsets}.}

\begin{table}[!h]
\centering
\begin{tabular}{ |c| c| l|}
\hline
symbol for & matrices of $M_n$ representing & restrictions on\\
subset of $M_n$& nonzero elements& $d$, $k$, $m$\\
\hline
\hline
$UT_n$& upper triangular & $d\ge 0$\\
 \hline
 $SUT_n$& strictly upper triangular & $d> 0$\\
 \hline
 $UF_n$& upper full & $d\ge 0$, $k=1$, and $m=n-d$
 \\
 \hline
 $SUF_n$& strictly upper full & $d> 0$, $k=1$, and $m=n-d$
 \\
 \hline
 $LT_n$& lower triangular & $d\le 0$\\
 \hline
 $SLT_n$& strictly lower triangular & $d< 0$\\
 \hline
 $LF_n$& lower full & $d\le 0$, $k=1-d$, and $m=n$
 \\
 \hline
 $SLF_n$& strictly lower full & $d< 0$, $k=1-d$, and $m=n$
 \\
 \hline
 $D_n$& diagonal & $d=0$
 \\
 \hline
\end{tabular}
\caption{\edit{Subsets of $M_n$ which, by Theorem~\ref{th:subsemigroups}, are subsemigroups of $M_n$}}
\label{tab:Mn-subsets}
\end{table}

Each subset named in our table's first column consists of $\zero$ together with the elements $\langle d, k, m\rangle$ of $M_n\setminus\{\zero\}$ that satisfy the restrictions in the third column. As an example,
\begin{equation}
\label{eq:sutn-definition}
\begin{split}
SUT_n&=\{\zero\}\cup \{\langle d, k, m\rangle \in M_n\setminus\{\zero\}: d\ge 1\}\\&=\{\zero\}\cup\{\langle d,k,m\rangle:\  d,k, m\in\mathbb{N}; \ k \le m\le n-d\},
\end{split}
\end{equation}
where the second expression follows from (\ref{eq:m-definition}). As another example, 
\begin{equation}
SLF_n=\{\zero\}\cup \{\langle d, 1-d, n\rangle \in M_n\setminus\{\zero\}: d< 0\}.
\end{equation} 
Any nonzero element of $M_n$ corresponds to a matrix with only one nonzero diagonal. The term ``full'' appearing in the table means that \textit{all} entries of this diagonal are nonzero (and equal to $1$). The terms upper/lower triangular, strictly triangular, etc. are used in the usual sense of matrix analysis.

\begin{theorem} 
\label{th:subsemigroups}
All nine sets written in \edit{Table~\ref{tab:Mn-subsets}} are subsemigroups of $M_n$. The semigroups $UT_n$, $UF_n$, $LT_n$, $LF_n$, and $D_n$ contain $\one$ and are submonoids of $M_n$; removing $\one$ from any of these five sets gives a subsemigroup of $S_n=M_n\setminus\{\one\}$. Finally, $SUT_n$, $SUF_n$, $SLT_n$, and $SLF_n$ consist solely of nilpotents, whose indexes are given by (\ref{eq:index-of-nilpotent}). 
\end{theorem}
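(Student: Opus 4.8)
The plan is to verify closure for each of the nine sets directly from the multiplication formula (\ref{eq:multiplication-finite})--(\ref{eq:mdoubleprime-finite}), exploiting the fact that every restriction in the table is a linear condition on $d$ (possibly together with $k$ and $m$) that behaves well under the operations $d''=d+d'$, $k''=\max(k,k'-d)$, $m''=\min(m,m'-d)$. For the four ``triangular'' families the argument is immediate: if $d\ge 0$ and $d'\ge 0$ then $d''=d+d'\ge 0$, and similarly $d>0,d'>0\Rightarrow d''>0$, and the same with the inequalities reversed for the lower families; together with the already-established closure of $M_n$ (Corollary~\ref{corr:S-semigroup}) and the fact that a zero product lands in every one of these sets (each contains $\zero$), this settles $UT_n$, $SUT_n$, $LT_n$, $SLT_n$, and also $D_n$ (take $d=d'=0$, forcing $d''=0$).

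For the ``full'' families I would first record what the restriction $k=1,\ m=n-d$ (resp.\ $k=1-d,\ m=n$) says, and then compute $k''$ and $m''$ for a product of two such elements. For $UF_n$: with $k=k'=1$, $m=n-d$, $m'=n-d'$, formula (\ref{eq:kdoubleprime-finite}) gives $k''=\max(1,1-d)=1$ since $d\ge 0$, and (\ref{eq:mdoubleprime-finite}) gives $m''=\min(n-d,\,n-d'-d)=n-d-\max(0,d')=n-d'' $ because $d'\ge 0$; hence the product is again ``upper full'' (and nonzero, since $1\le n-d''$ follows from $d''\le n-1$, which is (\ref{eq:restriction-d}), or directly from (\ref{eq:restrictions-on-kdoubleprime-finite})). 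The computation for $SUF_n$ is identical with $d,d'\ge 1$, giving $d''\ge 1$; and $LF_n$, $SLF_n$ are the mirror image using (\ref{eq:transpose}) or by the same arithmetic with signs reversed. Note the product is never $\zero$ in the full cases, which is what makes these sets closed without invoking the $\zero$-clause.

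For the submonoid claims, $\one=\langle 0,1,n\rangle$ (Theorem~\ref{th:multiplication-finite}) satisfies $d=0\ge 0$, $d=0\le 0$, $k=1$, $m=n=n-0$, so $\one$ lies in $UT_n$, $LT_n$, $UF_n$, $LF_n$, $D_n$; since these are subsemigroups of $M_n$ containing its identity, they are submonoids by the definition in Section~\ref{section:notation}. Removing $\one$ from any of the five leaves a set still closed under multiplication, because by Proposition~\ref{prop:not-a-group} a product of two elements of $M_n\setminus\{\one\}$ is never $\one$; hence each such set is a subsemigroup of $S_n$. Finally, every nonzero element of $SUT_n$, $SUF_n$, $SLT_n$, $SLF_n$ has $d\ne 0$, so by Theorem~\ref{th:idempotent-nilpotent} it is nilpotent with index given by (\ref{eq:index-of-nilpotent}); this proves the last sentence. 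I do not anticipate a genuine obstacle here — the only place requiring a moment's care is checking that the ``full'' products land back in the full sets rather than merely in $M_n$, i.e.\ that the $\max$ in $k''$ and the $\min$ in $m''$ collapse to the boundary values $1$ and $n-d''$; this is exactly where the sign hypotheses $d,d'\ge 0$ (or $\le 0$) are used.
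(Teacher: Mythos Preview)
Your approach matches the paper's own proof (which simply invokes Theorem~\ref{th:multiplication-finite} and the table for closure, $\one=\langle 0,1,n\rangle$ together with Corollary~\ref{corr:S-semigroup} for the submonoid part, and Theorem~\ref{th:idempotent-nilpotent} for the nilpotent part), and your more detailed computations for the ``full'' families are a welcome unpacking of what the paper leaves implicit.

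One point needs correcting, however. Your parenthetical claim that the product of two ``full'' elements is always nonzero is false: for instance $\langle 1,1,n-1\rangle^n=\zero$ (this is exactly (\ref{eq:shift-upper})), and more generally whenever $d+d'\ge n$ one gets $k''=1>n-d''=m''$. Neither (\ref{eq:restriction-d}) nor (\ref{eq:restrictions-on-kdoubleprime-finite}) forces $d''\le n-1$; (\ref{eq:restriction-d}) applies only to triplets that already represent elements of $M_n$, and (\ref{eq:restrictions-on-kdoubleprime-finite}) just reproduces $m''\le n-d''$ here. Fortunately this does not damage the closure argument at all, since $\zero\in UF_n$ (and $\zero\in SUF_n$, etc.) by definition, so you should simply drop the sentence ``Note the product is never $\zero$ in the full cases\ldots'' and invoke the $\zero$-clause just as you did for the triangular families.
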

\begin{proof}
The first statement is evident from Theorem \ref{th:multiplication-finite} and the table's third column, or from elementary matrix theory and the second column. The second statement follows from $\one=\langle 0,1,n\rangle$ and Corollary \ref{corr:S-semigroup}. The third is a consequence of the table's third column and Theorem~\ref{th:idempotent-nilpotent}.
\end{proof}
\edit{We now let $A^T$ denote the set of transposes (see Proposition \ref{prop:transpose}) of the subset $A$, viz.,
\begin{equation}
    A^T=\{x^T:x\in A\subseteq M_n\}.
\end{equation}
When $A$ is a subsemigroup of $M_n$, it follows from $(xy)^T=y^Tx^T$ that $A$ and $A^T$ are anti-isomorphic.} 

Further properties of the above-discussed subsemigroups are given in the proposition that follows.
\begin{proposition}
\edit{Table~\ref{tab:properties-subsemigroups}} gives the transposes $A^T$ of semigroups $A$, states whether $A$ and $A^T$ are commutative or not, and gives semigroup orders.

\begin{table}[!h]
\begin{tabular}{|c|c|c|l|}
\hline
subsemigroup $A$ & transpose $A^T$ & $A$ and $A^T$&order of $A$ and $A^T$\\
of $M_n$ & of $A$ is & are & {}\\
\hline
\hline
$UT_n$ & $LT_n$ & noncommutative& $1+\frac{1}{6}n(n+1)(n+2)$\\
\hline
$SUT_n$ & $SLT_n$ & noncommutative& $1+\frac{1}{6}(n-1)n(n+1)$\\
\hline
$UF_n$ & $LF_n$ & commutative& $1+n$
\\
\hline
$SUF_n$ & $SLF_n$ & commutative& $n$
\\
\hline
$D_n$ & $D_n$ & commutative & $1+\frac{1}{2}n(n+1)$
\\
\hline
\end{tabular}
\centering
\caption{\edit{Transposes of the semigroups in Theorem~\ref{th:subsemigroups}, together with their orders}}
\label{tab:properties-subsemigroups}
\end{table}
\end{proposition}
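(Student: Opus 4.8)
The plan is to treat the three data columns of the table in turn, using Proposition~\ref{prop:transpose} for the transpose column, Theorem~\ref{th:multiplication-finite} (or Proposition~\ref{prop:commute-condition}) for the commutativity column, and elementary triplet-counting for the order column; in every case the subsemigroups themselves are already known to be subsemigroups of $M_n$ from Theorem~\ref{th:subsemigroups}.

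For the transpose column I would push each defining restriction through the identity $\langle d,k,m\rangle^T=\langle -d,\,k+d,\,m+d\rangle$ of Proposition~\ref{prop:transpose}. Putting $d'=-d$, $k'=k+d$, $m'=m+d$, the condition ``$d\ge0$'' becomes ``$d'\le0$'', so $UT_n^T\subseteq LT_n$; ``$d>0$'' becomes ``$d'<0$'', so $SUT_n^T\subseteq SLT_n$; ``$d=0$'' is preserved, so $D_n^T\subseteq D_n$; and ``$d\ge0,\ k=1,\ m=n-d$'' becomes ``$d'\le0,\ k'=1-d',\ m'=n$'', which is exactly the defining condition of $LF_n$, and likewise $SUF_n^T\subseteq SLF_n$. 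Because $x\mapsto x^T$ is an involution, each of these inclusions is in fact an equality, which also re-confirms that every $A^T$ in the table is itself one of the listed subsemigroups of $M_n$.

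For commutativity I would first observe that since $(xy)^T=y^Tx^T$, the set $A$ is commutative if and only if $A^T$ is, so it suffices to deal with $D_n$, $UF_n$, and $SUF_n$. That $D_n$ is commutative was already noted after Proposition~\ref{prop:commute-condition} (the case $d=d'=0$). For $UF_n$ I would specialize Theorem~\ref{th:multiplication-finite} to factors $\langle d,1,n-d\rangle$ and $\langle d',1,n-d'\rangle$: one gets $d''=d+d'$, $k''=\max(1,1-d)=1$, $m''=\min(n-d,\,n-d-d')=n-d-d'$, and the product is $\langle d+d',1,n-(d+d')\rangle$ when $d+d'\le n-1$ and $\zero$ otherwise; both the value and the zero-condition are symmetric in $d$ and $d'$, so $UF_n$ and its subsemigroup $SUF_n$ commute. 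For the noncommutativity of $UT_n$ (every $n\ge2$) I would exhibit the pair $\langle0,1,1\rangle,\langle1,1,1\rangle$, which Theorem~\ref{th:multiplication-finite} multiplies to $\langle1,1,1\rangle$ in one order and to $\zero$ in the other; for $SUT_n$ (every $n\ge3$) the pair $\langle1,1,1\rangle,\langle1,2,2\rangle$ works, giving $\langle2,1,1\rangle$ versus $\zero$.

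For the orders I would count triplets, the adjoined $\zero$ accounting for the ``$1+$'' in each row and the transpose semigroups automatically having the same order via the bijection $x\mapsto x^T$. The nonzero elements of $D_n$ are the $\langle0,k,m\rangle$ with $1\le k\le m\le n$, numbering $\binom{n+1}{2}$; those of $UF_n$ are indexed by $d\in\{0,1,\dots,n-1\}$ (the constraint $k\le m$ collapsing to $1\le n-d$), numbering $n$, while $SUF_n$ drops $d=0$, numbering $n-1$; for $UT_n$, summing over $d$ the count $\binom{n-d+1}{2}$ of admissible pairs $(k,m)$ gives $\sum_{j=1}^{n}\binom{j+1}{2}=\binom{n+2}{3}=\tfrac16 n(n+1)(n+2)$ by the hockey-stick identity, and for $SUT_n$ the same sum runs only to $j=n-1$, yielding $\binom{n+1}{3}=\tfrac16(n-1)n(n+1)$. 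Nothing here is genuinely deep; the only points needing care are the two hockey-stick summations and the single degenerate exception $SUT_2=\{\zero,\langle1,1,1\rangle\}$, a two-element null semigroup and hence commutative, so the noncommutativity assertion for $SUT_n$ holds only for $n\ge3$ and should be flagged.
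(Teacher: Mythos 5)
Your proposal is correct and, for the only column the paper actually argues (the orders), it takes the same route as the paper's proof: direct summation over the triplets, just as in Propositions~\ref{prop:order} and \ref{prop:band}; your verifications of the transpose and commutativity columns via Proposition~\ref{prop:transpose} and Theorem~\ref{th:multiplication-finite} simply make explicit what the paper leaves as evident. Your observation that $SUT_2=\{\zero,\langle 1,1,1\rangle\}$ is a two-element null semigroup, hence commutative, is a genuine (if minor) correction to the table: the ``noncommutative'' entry for $SUT_n$ and $SLT_n$ requires $n\ge 3$.
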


\begin{proof}
The orders can be found by direct summation, similarly to the proofs of Propositions \ref{prop:order} and \ref{prop:band}.
\end{proof}

It is evident that
\begin{equation}
SUF_n\sub UF_n\sub UT_n \sub M_n, \quad
SUT_n\sub UT_n\sub M_n,
\end{equation}
and that the same relations hold when $U$ is replaced by $L$. It is also evident that
\begin{equation}
D_n=UT_n\cap LT_n=UF_n\cap LF_n.
\end{equation}

We proceed to give some additional properties of $D_n$, $SUF_n$, $SLF_n$, and $SUT_n$.

\subsection{Idempotents of \texorpdfstring{$M_n$}{Mn}; diagonal matrices}
\label{section:mdn}
\edit{As already discussed, $D_n$ is the semilattice of idempotents of $M_n$ and has order $\frac{n^2}{2}+\frac{n}{2}+1$. Multiplication in $D_n$ is given by
\begin{equation}
\langle 0,k,m\rangle \langle 0,k',m'\rangle=
\begin{cases}
\left\langle0,\max(k,k'),\min(m,m')\right\rangle,\quad \max(k,k')\le  \min(m,m'),\\
\zero,\quad \max(k,k')>\min(m,m').
\end{cases}
\end{equation}
and corresponds to the intersection of the corresponding diagonal vectors.}

\subsection{Shift matrices and subsemigroups they generate} 
The monoid elements $\langle 1,1,n-1\rangle$ and $\langle-1,2,n\rangle$ represent the usual backward and forward shift matrices \cite{HornJohnson}, which are transposes of one another. Their products are given by 
\begin{equation}
\langle 1,1,n-1\rangle\langle -1,2,n\rangle =\langle 0,1,n-1\rangle\in D_n
\end{equation}
and
\begin{equation}
\langle -1,2,n)\langle 1,1,n-1\rangle=\langle 0,2,n\rangle\in D_n.
\end{equation}
 
Corollary \ref{corr:powers} gives the respective powers as
\begin{equation}
\label{eq:shift-upper}
\langle 1,1,n-1\rangle^j=
\begin{cases}
\langle j,1,n-j\rangle,\ j=1,2,\ldots,n-1,\\
\zero,\ j=n,
\end{cases}
\end{equation}
and
\begin{equation}
\label{eq:shift-lower}
\langle -1,2,n\rangle^j=
\begin{cases}
\langle -j,1+j,n\rangle,\ j=1,2,\ldots,n-1,\\
\zero,\ j=n.
\end{cases}
\end{equation}
The powers in (\ref{eq:shift-upper}) and (\ref{eq:shift-lower}) are precisely the elements of $SUF_n$ and $SLF_n$, respectively; see the table in Theorem~\ref{th:subsemigroups}.  We have thus reached the following conclusion. 

\begin{proposition}
\label{prop:monogenic}
$SUF_n$ is the monogenic subsemigroup generated by $\langle 1,1,n-1\rangle$, and $SLF_n=SUF_n^T$ is the monogenic subsemigroup generated by $\langle -1,2,n\rangle=\langle 1,1,n-1\rangle^T$.
\end{proposition}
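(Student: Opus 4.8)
The plan is to simply match the explicit list of powers of the generator, already recorded in (\ref{eq:shift-upper}), against the description of $SUF_n$ supplied by the table in Theorem~\ref{th:subsemigroups}. First I would recall that the monogenic subsemigroup generated by an element $x$ of a semigroup is the set $\{x^j:j\in\mathbb{N}\}$ of all its positive powers. Taking $x=\langle 1,1,n-1\rangle$, equation (\ref{eq:shift-upper}) shows that this set equals $\{\langle j,1,n-j\rangle:1\le j\le n-1\}\cup\{\zero\}$: the powers $x,x^2,\dots,x^{n-1}$ are the nonzero triplets listed there, $x^n=\zero$, and hence $x^j=\zero$ for every $j\ge n$ (either because $\zero$ absorbs, or directly because $m^{(j)}=n-j<1=k^{(j)}$ in Corollary~\ref{corr:powers}). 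So the generated subsemigroup is exactly $\{\zero\}\cup\{\langle j,1,n-j\rangle:j=1,\dots,n-1\}$.

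Next I would read off from the table that the nonzero elements of $SUF_n$ are precisely the triplets $\langle d,k,m\rangle$ with $d>0$, $k=1$, and $m=n-d$; the constraint $k\le m$ inherited from (\ref{eq:m-definition}) forces $1\le n-d$, i.e.\ $1\le d\le n-1$. Thus the nonzero elements of $SUF_n$ are exactly $\langle d,1,n-d\rangle$ for $d=1,\dots,n-1$, which, after renaming $d$ as $j$, is the same list as the nonzero powers of $x$ found above. Adjoining $\zero$, I conclude $SUF_n=\{\langle 1,1,n-1\rangle^j:j\in\mathbb{N}\}$, which is the first assertion.

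For the second assertion I would invoke that transposition reverses products, so that $(x^j)^T=(x^T)^j$ for all $j\in\mathbb{N}$; consequently the monogenic subsemigroup generated by $x^T=\langle 1,1,n-1\rangle^T=\langle -1,2,n\rangle$ (the last equality being an instance of (\ref{eq:transpose})) is precisely $\{(x^T)^j:j\in\mathbb{N}\}=\{(x^j)^T:j\in\mathbb{N}\}=SUF_n^T$. That this set equals $SLF_n$ then follows either by applying (\ref{eq:transpose}) to each element $\langle d,1,n-d\rangle$ of $SUF_n$, which yields $\langle -d,1+d,n\rangle$, exactly the $SLF_n$ row of the table with the diagonal renamed, or, more directly, by comparing with the powers already computed in (\ref{eq:shift-lower}).

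Since every step is a direct comparison of explicit lists, there is no real obstacle here; the only point demanding a little care is checking that the index ranges coincide and, in particular, that $\zero$ is accounted for as the $n$th power of the generator rather than being an extra element the cyclic semigroup fails to reach.
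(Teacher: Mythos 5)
Your proposal is correct and follows essentially the same route as the paper: compute the powers of $\langle 1,1,n-1\rangle$ via Corollary~\ref{corr:powers} (equation (\ref{eq:shift-upper})), match them against the description of $SUF_n$ in the table of Theorem~\ref{th:subsemigroups}, and handle $SLF_n$ by transposition (or equivalently by (\ref{eq:shift-lower})). The extra care you take with the index ranges and with $\zero$ being reached as the $n$th power is exactly the right bookkeeping.
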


Proposition \ref{prop:monogenic} and eqns.   (\ref{eq:shift-upper}) and (\ref{eq:shift-lower}) 
can be recognized as well-known properties of the two shift matrices; in fact, these properties make clear why the set of upper (or lower) Toeplitz matrices of size $n$ form a commutative algebra \cite{HornJohnson}.

\subsection{Minimal generating set for \texorpdfstring{$SUT_n$}{SUTn}; rank of \texorpdfstring{$SUT_n$}{SUTn}}

 The semigroup $SUT_n$ was defined in (\ref{eq:sutn-definition}). The theorem that follows provides a minimal generating set (called $A_n$) for $SUT_n$, gives $\mathrm{rank}(SUT_n)$ \edit{(see Section~\ref{section:notation})}, and uses the triplet representation to find (non-unique) closed-form expressions for all $x\in SUT_n$. The generating set $A_n$ consists of all matrices of $SUT_n$ whose ones lie on the diagonal $d=1$. 

\begin{theorem}
\label{th:minimal-rank-sutn}
The subset $A_n$ of $SUT_n$ defined by
\begin{equation}
\label{eq:an-definition}
    A_n=\{\langle 1,k,m\rangle: 1\le k\le m \le n-1 \}
\end{equation}
is a minimal generating set for $SUT_n$, with
\begin{equation}
\label{eq:sutn-rank}
    \mathrm{rank}\left(SUT_n\right)=\abs{A_n}=\frac{1}{2}n(n-1).
\end{equation}
In fact, any $x\in SUT_n$ can be found explicitly as a power of some $y\in A_n$ via
\begin{equation}
\label{eq:sutn-explicit}
    x=
    \begin{cases}
     \langle 1,1,1\rangle^2,\quad x=\zero,\\
    \langle 1,k,m+d-1 \rangle^d,\quad x=\langle d,k,m\rangle\in SUT_n\setminus \{\zero\}. 
    \end{cases}
\end{equation}
\end{theorem}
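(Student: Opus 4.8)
The plan is to first establish the explicit formula (\ref{eq:sutn-explicit}), since this at one stroke shows $A_n\subseteq SUT_n$ and that $A_n$ generates $SUT_n$. For $x=\langle d,k,m\rangle\in SUT_n\setminus\{\zero\}$ we have $d\ge 1$ and $k\le m\le n-d$ by (\ref{eq:sutn-definition}); I would check directly that the triplet $y=\langle 1,k,m+d-1\rangle$ satisfies $1\le k\le m+d-1\le n-1$, so that $y\in A_n$, and that Corollary~\ref{corr:powers} gives $y^d=\langle d,k,m\rangle=x$ (here the nonzero clause $k^{(d)}\le m^{(d)}$ of Corollary~\ref{corr:powers} reduces to $k\le m$, which holds). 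For $x=\zero$, Corollary~\ref{corr:powers} yields $\langle 1,1,1\rangle^2=\zero$ with $\langle 1,1,1\rangle\in A_n$. Hence every element of $SUT_n$ is a power of an element of $A_n$, so $A_n$ is a generating set, and the cardinality $\abs{A_n}=\sum_{k=1}^{n-1}(n-k)=\frac{1}{2}n(n-1)$ is an immediate count.

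The crux is an observation on how an element of $A_n$ can arise as a product: if $y=\langle 1,k,m\rangle\in A_n$ and $y=x_1x_2\cdots x_r$ with every $x_i\in SUT_n$, then necessarily $r=1$ and $x_1=y$. Indeed, a nonzero product forces each $x_i$ to be nonzero, so by (\ref{eq:sutn-definition}) we may write $x_i=\langle d_i,k_i,m_i\rangle$ with $d_i\ge 1$; iterating (\ref{eq:ddoubleprime-finite}), the product has first coordinate $d_1+\cdots+d_r$, and setting this equal to $1$ forces $r=1$ and $d_1=1$, i.e.\ $x_1=\langle 1,k,m\rangle=y$.

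From this I would obtain minimality and the rank simultaneously. Let $A\subseteq SUT_n$ be \emph{any} generating set. By definition each $y\in A_n$ is a product of elements of $A\subseteq SUT_n$, and the observation above forces that product to have length $1$, so $y\in A$; thus $A_n\subseteq A$ and $\abs{A}\ge\abs{A_n}$. Since $A_n$ is itself a generating set, this yields $\mathrm{rank}(SUT_n)=\abs{A_n}=\frac{1}{2}n(n-1)$. The same inclusion $A_n\subseteq A$ shows that no proper subset of $A_n$ generates $SUT_n$, so $A_n$ is a minimal generating set (in fact the unique one).

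No step is expected to be a genuine obstacle; the only place demanding care is the verification of (\ref{eq:sutn-explicit}), where one must keep the clause $k^{(j)}\le m^{(j)}$ of Corollary~\ref{corr:powers} in view and confirm that the base point $\langle 1,k,m+d-1\rangle$ really lies in $A_n$ --- equivalently that $m+d-1\le n-1$, which is exactly the defining constraint $m\le n-d$ of $SUT_n$. Everything else is routine bookkeeping with (\ref{eq:ddoubleprime-finite}) and Corollary~\ref{corr:powers}.
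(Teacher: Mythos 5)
Your proposal is correct and follows essentially the same route as the paper: verify (\ref{eq:sutn-explicit}) via the power formula to show $A_n$ generates, then use the fact that the inclinations $d_i\ge 1$ sum under multiplication (so a product landing in $A_n$ must have a single factor) to force $A_n$ into every generating set, giving minimality and the rank. The only cosmetic difference is that the paper invokes Theorem~\ref{th:roots} to identify $\langle 1,k,m+d-1\rangle$ as the $d$th root of $x$, whereas you check it directly from Corollary~\ref{corr:powers}.
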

\begin{proof}
By Theorem~\ref{th:idempotent-nilpotent}, $\langle 1,1,1 \rangle$ is a square root of~$\zero$; and by Theorem~\ref{th:roots}, $\langle 1,k,m+d-1 \rangle$ is the (unique in $M_n$) $d$th root  of $x=\langle d,k,m\rangle\in SUT_n$. Therefore  (\ref{eq:sutn-explicit}) holds. Since both $\langle 1,1,1 \rangle$ and $\langle 1,k,m+d-1 \rangle$ belong to $A_n$, (\ref{eq:sutn-explicit}) explicitly demonstrates that $A_n$ generates $SUT_n$.

We now turn to minimality and rank. If $x_i=\langle d_i,k_i,m_i\rangle\in SUT_n\setminus\{\zero\}$, then $\sum_{i=1}^p d_i\ge p$ by (\ref{eq:sutn-definition}); eqn. (\ref{eq:an-definition}) and Theorem~\ref{th:multiplication-finite} then show that the product $\prod_{i=1}^p x_i$ can belong to $A_n$ only if $p=1$ and $x_1 \in A_n$.  In other words, any $y\in A_n$ can be written as a finite product of elements of $SUT_n$ only in a trivial manner (i.e., the product can only have one factor, equal to $y$), implying that \textit{any} generating set must contain this $y$. We have thus shown that
\begin{equation*}
    (B_n\subseteq SUT_n \mathrm{\ and \ } B_n \mathrm{\ generates\ } SUT_n)\implies A_n\subseteq B_n. 
\end{equation*}

It follows that no proper subset of $A_n$ can generate $SUT_n$, proving minimality of the generating set $A_n$. It also follows that $\abs{A_n}\le \abs{B_n}$ for any $B_n$ generating $SUT_n$, so that the rank of $SUT_n$ equals the cardinality of its generating set $A_n$. Computing $\abs{A_n}$ via (\ref{eq:an-definition}), we arrive at (\ref{eq:sutn-rank}).
\end{proof}
By taking transposes, one can immediately re-state Theorem~\ref{th:minimal-rank-sutn} so as to apply to $SLT_n=SUT_n^T$. \edit{In this case, the generating set is $A_n^T$, where
\begin{equation}
\label{eq:an-transpose-definition}
    A_n^T=\{\langle -1,k,m\rangle: 2\le k\le m \le n \}
\end{equation}}

\subsection{More subsemigroups of \texorpdfstring{$M_n$}{Mn}}
\label{sec:further-submonoids}
We close Section~\ref{section:submonoids} by listing some additional subsemigroups of $M_n$ that can be studied by specializing the triplet representation developed in this paper. The said subsemigroups are obtained by adjoining $\{\zero\}$ to any one of the following sets, 

\begin{itemize}
    \item The matrices of $M_n$ with $d=$integer multiple of $d_0$, where $1\le d_0\le n-1$.
    \item The matrices of $M_n$ with $d\ge d_0$, where $1\le d_0\le n-1$.
    \item The matrices of $M_n$ whose first row and first column are zero.
    \item The matrices of $M_n$ whose first row and last column are zero.
    \item The matrices of $M_n$ that have at most $j$ nonzero elements, where $2\le j\le n$ (see Example~\ref{ex:mequalsk-example} for the case $j=1$).
\end{itemize}

\section{Generating systems for \texorpdfstring{$S_n$}{Sn} and \texorpdfstring{$M_n$}{Mn}} 

\edit{
Based on our findings in Section~\ref{section:submonoids}, the present section determines several generating sets for $S_n$ and $M_n$, both as semigroups and as inverse semigroups. Since $M_n=SUT_n\cup D_n\cup SLT_n$, $M_n$ is generated as a semigroup by $A_n\cup D_n \cup A_n^T$ (see Theorem~\ref{th:minimal-rank-sutn} and (\ref{eq:an-transpose-definition})). In fact, $A_n\cup\{\one\}\cup A_n^T$ suffices:
\begin{proposition}\label{prop:gen-of-Sn}
$S_n$ is generated by $A_n\cup A_n^T$, and 
$M_n$ is generated by $A_n\cup\{\one\}\cup A_n^T$. 
\end{proposition}
\begin{proof}
    For the assertion pertaining to $S_n=M_n\setminus\{\one\}$, it suffices to show that $A_n\cup A_n^T$ generates $D_n\setminus\{\one\}$. This is apparent from (\ref{eq:an-definition}), (\ref{eq:an-transpose-definition}), and the equalities
    \begin{equation*}
        \langle 0,k,m\rangle=\langle 1,k,m\rangle
        \langle -1,k+1,m+1\rangle, \quad 1\le k\le m\le n-1,
    \end{equation*}
    \begin{equation*}
        \langle 0,k,n\rangle=\langle -1,k,n\rangle
        \langle 1,k-1,n-1\rangle, \quad 2\le k\le n,
    \end{equation*}
    which are consequences of Theorem~\ref{th:multiplication-finite}. By Proposition~\ref{prop:not-a-group}, any set generating $M_n$ must further include $\one$. Thus $M_n$ is generated by $A_n\cup\{\one\}\cup A_n^T$.
\end{proof}
\begin{corollary}
    $S_n$ is generated, as an inverse semigroup, by $A_n$. The monoid $M_n$ is generated, as an inverse semigroup, by $A_n\cup\{\one\}$.
\end{corollary}
}
\edit{One might think $A_n\cup A_n^T$ is \textit{minimal} (as a semigroup-generating set for $S_n$). By the theorem that follows, this is far from true; for any $n$, two elements are enough,} \edit{a stark difference compared to the $\frac{1}{2}n(n-1)$ elements needed to generate $SUT_n$.}

\begin{theorem}
\label{th:minimal-generating-set-for-mn}
\edit{
    The subset $\{p,q\}$ of $S_n$, where
    \begin{align}
        \langle 1, 1, n-1\rangle = p = q^T = \langle -1, 2, n\rangle^T
    \end{align}
    is a minimal generating set for $S_n$.}
\end{theorem}
\begin{proof}
\edit{
    By Proposition \ref{prop:gen-of-Sn}, all we need to prove is that $\{p,q\}$ generates the sets $A_n$ and $A_n^T$. For any $1\le k\le m\le n-1$ we notice that
    \begin{equation*}
        \langle 1, k, m\rangle = p^{n-m} q^{n-m+k-1} p^k
    \end{equation*}
    which follows from Theorem~\ref{th:multiplication-finite} and Corollary~\ref{corr:powers}. This proves that we can generate $A_n$; as $\{p,q\} = \{p,q\}^T$, it also proves that we can generate $A_n^T$.
    Minimality follows from $p^j \ne q$ and $q^j \ne p$ for all $j\in\mathbb{N}$.
}
\end{proof}
\begin{corollary}
    \edit{
    The subset $\{p,q,\one\}$ of $M_n$ is a minimal generating set for $M_n$.
    }
\end{corollary}
\begin{proof}
\edit{$\one$ cannot be generated by $\{p,q\}$ (Proposition~\ref{prop:not-a-group}), so the assertion follows immediately from  Theorem~\ref{th:minimal-generating-set-for-mn} and $M_n=S_n\cup \{\one\}$.}
\end{proof}

\section{Graphical Interpretations}
\label{section:graphical}

This section examines the rook $n$-diagrams associated with $M_n$. As we already did in Figures~\ref{fig:rook} and \ref{fig:poi}, we assume that the two rows of vertices form an $n\times 2$ orthogonal grid.

A rook $n$-diagram represents an element of $M_n$ iff its lines (edges) are parallel, equispaced, and uninterrupted,  as in Figure~\ref{fig:mn}.  Note that the said requirements guarantee non-intersecting lines, so that $M_n\sub \mathcal{POI}_n$. 

\begin{figure}[h]
  \centering
  \includegraphics[width=.95\textwidth]{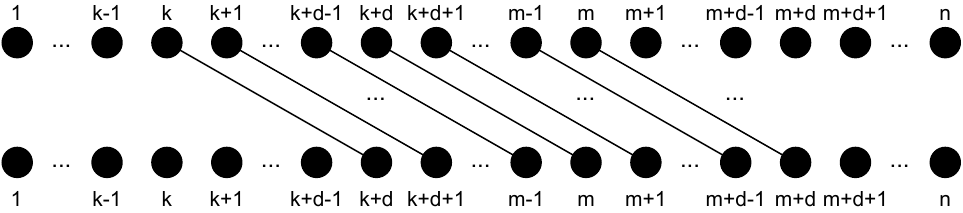}
  \caption{Rook $n$-diagram representation of the element $\langle d,k,m \rangle$ belonging to 
  $M_n\setminus\{\zero\}$}
  \label{fig:mn}
\end{figure}

Multiplication of two elements in $M_n$ is done graphically in the usual manner, as illustrated in Figure~\ref{fig:multiplication}.

\begin{figure}[h]
  \centering
  \includegraphics[width=\textwidth]{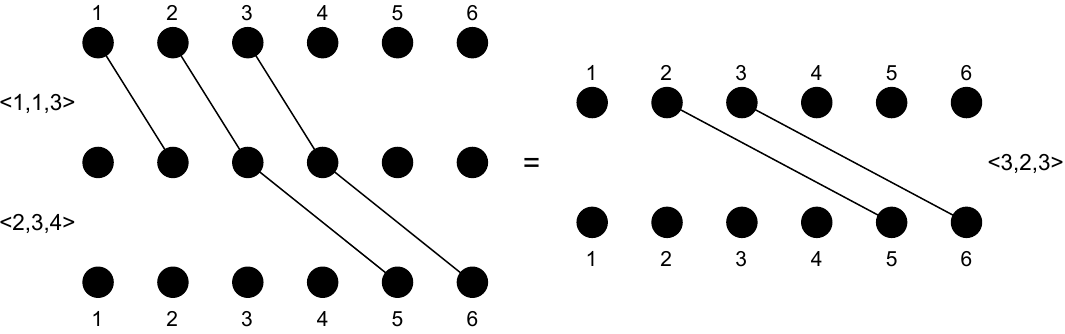}
  \caption{Graphical multiplication in $M_n$ via rook $n$-diagrams: Here, we depict $\langle 1,1,3 \rangle \langle 2,3,4 \rangle = \langle 3,2,3 \rangle$}
  \label{fig:multiplication}
\end{figure}

It is evident that many of our previous results can be depicted and explained graphically, and that graphical methods can help one discover further properties of $M_n$ and its subsemigroups. We thus proceed to give some indicative graphical interpretations (there are many more). In the discussions that follow, we conveniently refer to the parameter $d$ as the (positive, negative, or zero) \textsl{inclination} of the rook $n$-diagram representing $\langle d,k,m\rangle$.

\begin{itemize}
    \item It is apparent (e.g. from Figure~\ref{fig:mn}) that $\one$ is the only element of $M_n$ that has $n$ lines. This fact and the graphical interpretation of multiplication  make it easy to understand why the ``if'' part of Proposition~\ref{prop:not-a-group} ($xy=\one\implies x=y=\one$) is true: The number of lines in the product of two elements is at most the minimum number of the two elements' lines. Thus if $xy=\one$, then both $x$ and $y$ must have $n$ lines, so that $x=y=\one$.

    \item The subsets $D_n$ and $SUT_n$ correspond to a zero or positive inclination, respectively. It is apparent that  multiplication again leads to a zero or positive inclination, explaining why $D_n$ and $SUT_n$ are subsemigroups. It is also apparent why the elements of $D_n$ are the idempotents of $M_n$.
    
    \item The elements of the subset $UF_n$ are those whose first (leftmost) line starts at vertex $1$ of the top row, and whose last (rightmost) line ends at vertex~$n$ of the bottom row. Evidently multiplication retains these properties, so that $UF_n$ is a subsemigroup. The subsemigroup's $n$ elements correspond to the $n$ possible inclinations $d=0,1,\ldots,n-1$.

    \item Figure~\ref{fig:multiplication} helps one understand why multiplication adds inclinations and why taking the $j$th power multiplies the inclination by $j$, as expressed by  (\ref{eq:ddoubleprime-finite}) and (\ref{eq:powers-2}).  We can further understand formula (\ref{eq:index-of-nilpotent}) for  the index of a nonzero nilpotent: Let $x=\langle d,k,m\rangle$ with $d>0$. The first line of $x^j$ starts at (top) vertex $k$ and ends at (bottom) vertex $k+jd$. However, the last line of $x^j$ cannot end after vertex $m+d$. This means that the index $\ell$ of $x$ is the smallest $j$ such that $x^j$'s first line would have been after $m+d$, i.e. the minimum $j$ for which $k+jd\geq m+d+1$. When $d<0$, we can find the index in an analogous manner.
    
    \item In a similar manner, we can perceive when the  product of two elements is zero: The first line of $\langle d,k,m\rangle\langle d',k',m'\rangle$ ends on vertex $k+d+d'$ or to the right of that vertex, but it cannot end to the right of vertex $m'+d'$. Therefore $k+d+d'>m'+d'$ is a sufficient condition for the product to be $\zero$, as asserted by Corollary~\ref{corr:nonzero}.
    
    \item Much of Theorem \ref{th:roots} can be explained in the following manner. The inclination of $\langle d',k',m'\rangle^j$ is $jd'$. Thus $x = \langle d,k,m\rangle$ can have a $j$th root only if $d$ is a multiple of $j$. Assume now that $d>0$. The first line of the $j$th root of $\langle d,k,m\rangle$, must start at $k$ and its last line must end at $m+d$. This means that $y = \langle d',k',m'\rangle$ and $y^j = x$ imply $k' = k$ and $m' + d' = m + d$, as illustrated in Figure~\ref{fig:roots}.  
    \begin{figure}[h]
        \centering
        \includegraphics[width=.4\textwidth]{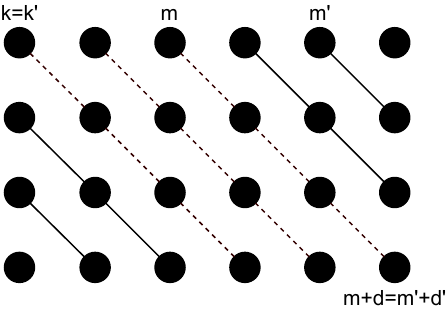}
        \caption{Graphical interpretation of $\langle d',k',m'\rangle^j = \langle d,k,m\rangle$. Each row shows $\langle d',k',m'\rangle$, while the dashed lines show $\langle d,k,m\rangle$}
        \label{fig:roots}
    \end{figure}
\end{itemize}

\section{Summary}\label{sec13}

In this paper, we defined a \edit{noncommutative, inverse} monoid $M_n$; showed that the nonzero elements of $M_n$ can be represented as $\langle d,k,m \rangle$, where $d$, $k$, and $m$ are properly defined integers; and used this ``triplet representation'' to study numerous properties of $M_n$. Both $M_n$ and the set $S_n=M_n\setminus \{\one\}$ (where $\one$ is the monoid identity, corresponding to the $n \times n$ identity matrix) are \edit{inverse} subsemigroups of the well-known rook monoid $R_n$, also known as the symmetric inverse semigroup. The monoid $M_n$ consists of those elements of $R_n$ whose ones lie on a single diagonal and form an uninterrupted block (i.e., no zero lies between any two ones). Equivalently, the rook-$n$ diagram of any monoid element has the form shown in Figure~\ref{fig:mn}. 

In Sections~\ref{section:infinite} and \ref{section:finite}, we regarded $M_n$ as a subsemigroup of a certain semigroup $S_\infty$ of countably infinite order; this helped us find the triplet representation of the product of two monoid elements (Theorem~\ref{th:multiplication-finite}).  Our results on $M_n$ include:
\begin{itemize}
\item A simple formula for powers (Corollary~\ref{corr:powers}).
\item A proof that any element is idempotent or nilpotent (Theorem~\ref{th:idempotent-nilpotent}). Proposition~\ref{prop:band} further gives the numbers of idempotents and nilpotents.
\item A formula (in Theorem~\ref{th:idempotent-nilpotent}) for the index $\ell$ of any nonzero nilpotent  in terms of $d$, $k$, and $m$. 
\item A proof that the $j$th root of any given nonzero element either does not exist; or does exist, is unique, and has the triplet representation given explicitly in Theorem~\ref{th:roots}.

\item \edit{Green's relations (Theorem~\ref{th:green}), enabling a determination of all ideals (Theorem~\ref{th:ideals}).}

\item An investigation of the ($n$-dependent) number of nonzero products in $S_n~\setminus~\{\zero\}$; this number is $50\%$ of the total number of products when $n=2$, and appears to be slightly larger than  $50\%$ for all other $n$ (Conditional Proposition~\ref{cond:conditional} and Figure~\ref{fig:ratio}). 

\item The use of rook-$n$ diagrams to give graphical interpretations of many of our results (Section~\ref{section:graphical}).  

\item An investigation of some subsemigroups of $M_n$ (Section~\ref{section:submonoids}). Two of these are the familiar monogenic subsemigroups generated by the well-known backward and forward shift matrices (Proposition~\ref{prop:monogenic}). For another subsemigroup---denoted by $SUT_n$ and consisting of all strictly upper triangular matrices of $M_n$---we found  a minimal generating set $A_n$, gave a (non-unique) explicit formula for any $x\in SUT_n$, and computed the rank of $SUT_n$ (Theorem~\ref{th:minimal-rank-sutn}). 

\item \edit{The surprising result (Theorem~\ref{th:minimal-generating-set-for-mn}) that, irrespective of $n$, two elements suffice to generate $S_n$.}

\item Section~\ref{sec:further-submonoids}, finally, lists more subsemigroups that can be investigated along similar lines. 
\end{itemize}

\backmatter

\bibliography{ref.bib}

\end{document}